\newtheorem{theorem}{Theorem}[section]
\newtheorem{lemma}[theorem]{Lemma}
\newtheorem{proposition}[theorem]{Proposition}
\newtheorem{corollary}[theorem]{Corollary}
\newtheorem{definition}[theorem]{Definition}
\newcommand\id{\mathop{\rm id}}
\newcommand\Lat{\mathop{\rm Lat}}
\newcommand\Alg{\mathop{\rm Alg}}
\newcommand{\cl}[1]{\mathcal{#1}}
\newcommand{\bb}[1]{\mathbb{#1}}
\begin{document}

\title{Tensor products of subspace lattices and rank one density}

\author{S. Papapanayides and I. G. Todorov}
\date{28 March 2012}

\address{Department of Pure Mathematics, Queen's University Belfast,
Belfast BT7 1NN, United Kingdom}

\email{spapapanayides01@qub.ac.uk}

\address{Department of Pure Mathematics, Queen's University Belfast,
Belfast BT7 1NN, United Kingdom}

\email{i.todorov@qub.ac.uk}

\begin{abstract}
We show that, if $\cl M$ is a subspace lattice with the property
that the rank one subspace of its operator algebra is weak* dense,
and $\cl L$ is a commutative subspace lattice, then $\cl L\otimes\cl
M$ possesses property (p) introduced in \cite{todorovshulman1}. If
$\cl M$ is moreover an atomic Boolean subspace lattice while $\cl L$
is any subspace lattice, we provide a concrete lattice theoretic
description of $\cl L \otimes \cl M$ in terms of projection valued
functions defined on the set of atoms of $\cl M$. As a consequence,
we show that the Lattice Tensor Product Formula holds for $\Alg \cl
M$ and any other reflexive operator algebra and give several further
corollaries of these results.
\end{abstract}

\maketitle

\section{Introduction}\label{s_intro}

Let $\cl A$ and $\cl B$ be unital operator algebras acting on
Hilbert space. The Lattice Tensor Product Formula (LTPF) problem
asks if the invariant subspace lattice $\Lat (\cl A\otimes\cl B)$ of
the (weak* spatial) tensor product of $\cl A$ and $\cl B$ is the
tensor product of the invariant subspace lattices $\Lat\cl A$ and
$\Lat \cl B$. The origins of this problem can be found in the Tomita
Commutation Theorem, which asserts that the \lq\lq dual'' statement,
namely the Algebra Tensor Product Formula, holds for the projection
lattices of von Neumann algebras. The LTPF problem is related to the
question of reflexivity for subspace lattices, which asks to decide
whether a given lattice of projections on Hilbert space is the
invariant subspace lattice of some operator algebra (see P. R.
Halmos' pivotal paper \cite{boolHalmos}). Although reflexivity
questions have attracted considerable attention in the literature,
little progress has been made on the LTPF problem since the
initiation of its study in \cite{ltpfintro}. One of the reasons for
this is the lack of useful descriptions of the tensor product of two
subspace lattices which, in its own right, is due to the lack of
compatibility between the lattice operations and the strong operator
topology. It is known, however, that the LTPF problem has an
affirmative answer if $\cl A$ and $\cl B$ are von Neumann algebras
one of which is injective \cite{todorovshulman1}, if $\cl A$ is a
completely distributive CSL algebra, while $\cl B$ is any other
operator algebra \cite{Harrison}, as well as when both $\cl A$ and
$\cl B$ are CSL algebras \cite{todorovir}. Even the special case,
where $\cl A$ consists of the scalar multiples of the identity
operator, is in general open, although several partial results were
obtained in \cite{todorovshulman1}.

Properties related to the subspace generated by the rank one
operators in a given operator algebra $\cl A$ (hereafter referred to
as the rank one subspace of $\cl A$) have been widely studied (see,
e.g. \cite[Chapter 23]{dav-book}). In this paper, we continue the
study of the LTPF problem by considering the case where one of the
algebras has the property that its rank one subspace is weak* dense.
The class of operator algebras with this property is rather large;
it includes as a special case the algebras of all operators leaving
two fixed non-trivial subspaces invariant \cite{decKatavolos},
\cite{Papadakis}, as well as the operator algebras of more general
atomic Boolean subspace lattices \cite{memoirs}.

The paper is organised as follows: in Section \ref{s_p}, we show
that if $\cl M$ is a subspace lattice such that the rank one
subspace of the algebra $\cl A = \Alg \cl M$  is weak* dense in $\cl
A$, then the tensor product of $\cl M$ with the full projection
lattice on an infinite dimensional separable Hilbert space is
reflexive. This establishes the LTPF for the algebras $\cl A$ and
$\bb{C}I$. The result is then extended to lattices of the form $\cl
M\otimes\cl L$, where $\cl L$ is a CSL, thus generalising a
corresponding result proved earlier in \cite{todorovshulman1}. In
Section \ref{s_tabsl}, we restrict our attention to the case where
$\cl M$ is an atomic Boolean subspace lattice (ABSL), and achieve a
convenient description of the tensor product $\cl M\otimes\cl L$,
where $\cl L$ is an arbitrary subspace lattice, showing that it is
isomorphic to the lattice of $\cl L$-valued maps defined on the set
of atoms of $\cl M$. We also show that the property of semistrong
closedness of subspace lattices, introduced and studied in
\cite{todorovshulman1}, is preserved under tensoring with $\cl M$
(see Proposition \ref{p_see} for the complete statement). In Section
\ref{s_ltpf}, we show that if $\cl L$ is any reflexive subspace
lattice then the LTPF holds for the algebras $\cl A$ and $\Alg \cl
L$. Some further consequences of the description of the tensor
product from Section \ref{s_tabsl} are also included in Section
\ref{s_ltpf}. In the next section, we collect some preliminaries and
fix notation.

\section{Preliminaries}\label{s_prel}

Let $H$ be a Hilbert space and $\cl S_H$ be the set of all
closed subspaces of $H$. The set $\cl S_H$ is a complete lattice with respect to the
operations of intersection $\wedge$ and closed linear linear span $\vee$.
Using the bijective correspondence between $\cl S_H$ and the set
$\cl P_H$ of all orthogonal projections on $H$, under which a closed subspace $\cl F$
corresponds to the projection with range $\cl F$,
we transfer the lattice structure of $\cl S_H$ to $\cl P_H$, and denote the
lattice operations on $\cl P_H$ obtained in this way again by
$\wedge$ and $\vee$.
A \emph{subspace lattice} on $H$ is a sublattice $\cl L$ of $\cl P_H$
containing $0$ and $I$ and closed in the strong operator topology.

Let $\cl B(H)$ be the algebra of all bounded linear operators acting
on $H$. If $\cl A\subseteq \cl B(H)$, it is customary to denote by
$\Lat\cl A$ the set of all projections on $H$ whose ranges are
invariant under all operators in $\cl A$. It is easy to show that
$\Lat \cl A$ is a subspace lattice. Conversely, given any set of
projections $\cl L\subseteq \cl P_H$, let $\Alg\cl L$ be the set of
all operators on $H$ leaving invariant each element of $\cl L$. It
is easy to see that $\Alg\cl L$ is a unital subalgebra of $\cl B(H)$
closed in the weak operator topology. A subspace lattice $\cl L$ is
called \emph{reflexive} if $\cl L = \Lat\Alg\cl L$. Similarly, an
operator algebra $\cl A$ is called \emph{reflexive} if $\cl A =
\Alg\Lat \cl A$. A subspace lattice $\cl L$ is called a
\emph{commutative subspace lattice} (or \emph{CSL} for short) if $PQ
= QP$ for all $P,Q\in \cl L$ \cite{arvenson}.

If $H_1$ and $H_2$ are Hilbert spaces and $\cl L_1\subseteq \cl
P_{H_1}$ and $\cl L_2\subseteq \cl P_{H_2}$ are subspace lattices,
we denote by $\cl L_1\otimes\cl L_2$ the subspace lattice generated
by the projections of the form $L_1\otimes L_2$ acting on the
Hilbert space tensor product $H_1\otimes H_2$, where $L_1\in \cl
L_1$ and $L_2\in \cl L_2$. Given operator algebras $\cl A_1\subseteq
\cl B(H_1)$ and $\cl A_2\subseteq \cl B(H_2)$, we let $\cl
A_1\otimes\cl A_2$ be the weak* closed operator subalgebra of $\cl
B(H_1\otimes H_2)$ generated by the elementary tensors $A_1\otimes
A_2$, with $A_1\in \cl A_1$ and $A_2\in \cl A_2$. We denote by $I$
the identity operator acting on a separable infinite dimensional
Hilbert space, and set $1\otimes \cl A = \bb{C}I\otimes\cl A$, where
$\bb{C}I = \{\lambda I : \lambda\in \bb{C}\}$. We say that the
Lattice Tensor Product Formula (LTPF) holds for $\cl A_1$ and $\cl
A_2$ if
$$\Lat (\cl A_1\otimes\cl A_2) = \Lat\cl A_1\otimes \Lat \cl A_2.$$
Similarly, the Algebra Tensor Product Formula (ATPF) is said to hold for the subspace lattices
$\cl L_1$ and $\cl L_2$ if
$$\Alg (\cl L_1\otimes\cl L_2) = \Alg\cl L_1\otimes\Alg\cl L_2.$$

The following notion will play an essential role in this paper.

\begin{definition}[\cite{todorovshulman1}]\label{d_p}
A subspace lattice $\cl L$ is said to possess \emph{property (p)} if
the lattice $\cl P_{\ell^2} \otimes \cl L$ is reflexive.
\end{definition}
\noindent It follows from \cite[Proposition 4.2]{todorovshulman1}
that $\cl L$ possesses property (p) if and only if $\cl P_{\ell^2}
\otimes \cl L = \Lat(1 \otimes \Alg\cl L)$.

If $x,y\in H$, we denote by $R_{x,y}$ the rank one operator on $H$ given by
$R_{x,y}(z) = (z,y)x$, $z\in H$. It was shown in \cite{Stronglyreflat} that
the rank one operator $R_{x,y}$ belongs to $\Alg\cl L$ if and only if
there exists $L\in \cl L$ such that $x = Lx$ and $L_- y = 0$, where
$$L_- = \vee\{P\in \cl L : L\not\leq P\}.$$
We say that a subspace lattice $\cl L$ possesses the \emph{rank one
density property} if the subspace of $\Alg\cl L$ generated by the
rank one operators contained in $\Alg\cl L$ is weak* dense in
$\Alg\cl L$. It was shown in \cite{unknown} that if $\cl L$
possesses the rank one density property then it is completely
distributive.

An atomic Boolean subspace lattice (ABSL) is a distributive and
complemented subspace lattice for which there exists a set $\cl E =
\{E_j\}_{j\in J}\subseteq \cl L$ of minimal projections (called
\emph{atoms}) such that for every $L\in \cl L$ there exists
$J_L\subseteq J$ with $L = \vee_{j\in J_E} E_j$ \cite{boolHalmos},
\cite{memoirs}. A special case of interest arises when $\cl E$ has
two elements, see \cite{decKatavolos} and \cite{Papadakis}.

Along with the strong operator topology, we will also use the
semi-strong convergence introduced in \cite{limsupsHalmos}. Namely,
a sequence $(P_n)_{n\in \bb{N}}$ of projections acting on a Hilbert
space $H$ is said to converge semistrongly to a projection $P$ on
$H$, if (a) for every $x\in PH$ there exists a sequence $(x_n)_{n\in
\bb{N}}\subseteq H$ with $x_n \in P_n H$, $n\in \bb{N}$, such that
$x_n\rightarrow_{n\rightarrow\infty} x$, and (b) if $(x_{k})_{k\in
\bb{N}}\subseteq H$ is a convergent sequence of vectors such that
$x_{k}\in P_{n_k}H$, for some increasing sequence $(n_k)_{k\in
\bb{N}}\subseteq \bb{N}$, then $\lim_{k \rightarrow \infty} x_k \in
PH$. It was shown in \cite{limsupsHalmos} that
$P_n\rightarrow_{n\rightarrow \infty} P$ in the strong operator
topology if and only if $P_n\rightarrow_{n\rightarrow \infty} P$
semistrongly and $P_n^{\perp}\rightarrow_{n\rightarrow \infty}
P^{\perp}$ semistrongly, where, for a projection $Q$, we let
$Q^{\perp} = I - Q$ be its orthogonal complement. The weak operator
(resp. strong operator, weak*) topology will be denoted by {\it w}
(resp. {\it s}, {\it w*}).

\section{Property (p)}\label{s_p}

Let $H$ and $K$ be Hilbert spaces, with $K$  infinite dimensional
and separable, let $\cl P = \cl P_K$ be the full projection lattice
on $K$ and let $\cl L\subseteq \cl P$ be a subspace lattice. For any
subset $\cl E\subseteq \cl P_H$, we let $m(\cl E,\cl L)$ be the set
of all maps from $\cl E$ to $\cl L$. If $f,g \in m(\cl E,\cl L)$, we
define $f \vee g$ and $f \wedge g$ to be the elements of $m(\cl
E,\cl L)$ given by
$$(f \vee g)(E) = f(E) \vee g(E) \text{ and } (f \wedge g)(E) = f(E) \wedge
g(E), \ \ \  E \in \cl E.$$ It is clear that, under these
operations, $m(\cl E,\cl L)$ is a complete lattice. Let $\phi_{\cl
E,\cl L} : \cl P_{K\otimes H} \rightarrow m(\cl E,\cl L)$ be the map
sending a projection $Q$ on $K\otimes H$ to the map $f_Q$ given by
\begin{equation}\label{eq_uli}
f_Q(E) = \vee \{P\in \cl L : P \otimes E \leq Q\}, \ \ \ \  E \in \cl E.
\end{equation}
We note that if $E_1,E_2\in \cl E$ are such that $E_1\wedge E_2\in \cl E$, then
\begin{equation}\label{eq_ul}
f_Q(E_1) \vee f_Q(E_2) \leq f_Q(E_1\wedge E_2).
\end{equation}
Dually, let $\theta : m(\cl E,\cl L)\rightarrow \cl P_{K\otimes H}$ be
the map given by
$$\theta(f) = \vee \{f(E) \otimes E : E \in \cl E\}, \ \ \ \ f\in m(\cl E,\cl L).$$

For the rest of this section, fix a
subspace lattice $\cl M\subseteq \cl P_H$ and let $\cl A = \Alg \cl M\subseteq \cl B(H)$.
It is clear that the map $\theta$ sends $m(\cl M,\cl L)$ into $\cl L \otimes \cl M$,
for every subspace lattice $\cl L\subseteq \cl P$.

We first note that $\theta$ is $\vee$-preserving; the proof is
straightforward and we omit it.

\begin{proposition}\label{ultralemma2}
If $(f_{\alpha})_{\alpha\in \bb{A}} \in m(\cl M,\cl L)$ then $\theta(\vee_{\alpha\in \bb{A}} f_{\alpha}) = \vee_{\alpha\in \bb{A}} \theta(f_{\alpha})$.
\end{proposition}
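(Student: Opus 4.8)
The plan is to reduce the statement to the single elementary fact that, for any family $(P_{\alpha})_{\alpha\in \bb{A}}$ of projections on $K$ and any projection $E$ on $H$,
$$\left(\vee_{\alpha\in \bb{A}} P_{\alpha}\right)\otimes E = \vee_{\alpha\in \bb{A}} (P_{\alpha}\otimes E),$$
all joins being taken in the relevant full projection lattices. To see this, I would pass to ranges: the range of $P\otimes E$ is the Hilbert space tensor product of the ranges of $P$ and $E$, and if $\cl F$ denotes the closed linear span of $\{P_{\alpha}K : \alpha\in \bb{A}\}$ then $\cl F\otimes (EH)$ coincides with the closed linear span of $\{(P_{\alpha}K)\otimes (EH) : \alpha\in \bb{A}\}$ inside $K\otimes H$. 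The inclusion $\supseteq$ is immediate since $P_{\alpha}K\subseteq \cl F$ for each $\alpha$; for $\subseteq$ one approximates an elementary tensor $\xi\otimes\eta$ with $\xi\in \cl F$ and $\eta\in EH$ by tensoring with $\eta$ a finite linear combination of vectors drawn from $\cup_{\alpha} P_{\alpha}K$ that approximates $\xi$, and then uses that such elementary tensors span a dense subspace.

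Granting this, I would unwind both sides. By the definition of the pointwise join in $m(\cl M,\cl L)$ we have $(\vee_{\alpha} f_{\alpha})(E) = \vee_{\alpha} f_{\alpha}(E)$ for every $E\in \cl M$, so
$$\theta\left(\vee_{\alpha} f_{\alpha}\right) = \vee\left\{\left(\vee_{\alpha} f_{\alpha}(E)\right)\otimes E : E\in \cl M\right\}.$$
Applying the displayed identity with $P_{\alpha} = f_{\alpha}(E)$ rewrites the $E$-th term as $\vee_{\alpha}(f_{\alpha}(E)\otimes E)$, whence
$$\theta\left(\vee_{\alpha} f_{\alpha}\right) = \vee_{E\in \cl M}\ \vee_{\alpha\in \bb{A}} (f_{\alpha}(E)\otimes E).$$
Since $\cl P_{K\otimes H}$ is a complete lattice, an iterated supremum over the two indices $E$ and $\alpha$ may be computed in either order; hence the right-hand side equals $\vee_{\alpha\in \bb{A}}\vee_{E\in \cl M}(f_{\alpha}(E)\otimes E) = \vee_{\alpha\in \bb{A}}\theta(f_{\alpha})$, which is the asserted equality.

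The only point that requires any care is the interchange of the closed linear span with the Hilbert space tensor product in the range computation, and this is the routine density argument for elementary tensors; the remaining manipulations are purely formal identities for suprema in a complete lattice. This is presumably why the authors describe the proof as straightforward and omit it.
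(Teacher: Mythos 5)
Your argument is correct and is precisely the routine one the paper has in mind when it omits the proof as straightforward: the only substantive point is the identity $\left(\vee_{\alpha} P_{\alpha}\right)\otimes E = \vee_{\alpha}\left(P_{\alpha}\otimes E\right)$, which you justify by the standard density argument for elementary tensors, after which the claim follows by interchanging the iterated suprema in the complete lattice $\cl P_{K\otimes H}$. Nothing further is needed.
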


\begin{lemma}\label{l_cyclic}
Let $\cl M\subseteq \cl P_H$ be a subspace lattice with the rank one density property,
$\cl A = \Alg \cl M$ and $\xi \in K\otimes H$. There exists $f\in m(\cl M,\cl P)$ such that
the projection onto the cyclic subspace
$\overline{(1\otimes\cl A)\xi}$ coincides with $\theta(f)$.
\end{lemma}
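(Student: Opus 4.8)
The plan is to use the rank one density of $\cl A = \Alg\cl M$ to decompose the cyclic subspace $\cl N := \overline{(1\otimes\cl A)\xi}$ into pieces coming from rank one operators, and then to read off the function $f$ from these pieces. Write $P_{\cl N}$ for the projection onto $\cl N$, and for each atom-candidate $E \in \cl M$ I would like to set $f(E) = f_{P_{\cl N}}(E) = \vee\{P \in \cl P : P\otimes E \leq P_{\cl N}\}$, i.e. take $f = \phi_{\cl M,\cl P}(P_{\cl N})$; the content of the lemma is then that $\theta(f) = P_{\cl N}$.

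First I would observe the easy inclusion $\theta(f) \leq P_{\cl N}$, which holds for any projection $Q$ in place of $P_{\cl N}$ directly from the definitions of $\phi_{\cl M,\cl P}$ and $\theta$: each $f(E)\otimes E$ is a join of projections $P\otimes E$ below $Q$, hence $f(E)\otimes E \leq Q$, and taking the join over $E \in \cl M$ gives $\theta(f) \leq Q$. So the real work is the reverse inclusion $P_{\cl N} \leq \theta(f)$. For this, since $\cl N$ is the closed linear span of the vectors $(1\otimes A)\xi$ with $A \in \cl A$, and $\cl A$ is by hypothesis the weak* closure of the span of the rank one operators $R \in \cl A$, I would first reduce to showing that $(1\otimes R)\xi \in \theta(f)K\otimes H$ for every rank one $R = R_{x,y} \in \cl A$ (using that $Q \mapsto (1\otimes Q)\xi$ is weak*-to-weak continuous and $\theta(f)$ is a closed subspace, so its preimage is weak* closed — here one must be a little careful and argue that the set of $A$ with $(1\otimes A)\xi \in \theta(f)K\otimes H$ is a weak* closed subspace of $\cl B(H)$, which it is). By the Longstaff characterisation recalled in the preliminaries, $R_{x,y} \in \cl A = \Alg\cl M$ means there is $L \in \cl M$ with $Lx = x$ and $L_- y = 0$.

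Now fix such a rank one $R_{x,y}$ and the corresponding $L \in \cl M$. Writing $\xi = \sum_i e_i \otimes \eta_i$ in terms of an orthonormal basis $(e_i)$ of $K$, one computes $(1\otimes R_{x,y})\xi = \sum_i e_i \otimes (\eta_i,\cdot)$-type terms; more usefully, $(1\otimes R_{x,y})\xi = \zeta \otimes x$ for a suitable vector $\zeta \in K$ depending on $\xi$ and $y$ (concretely $\zeta = (I_K\otimes R_{y,y}^{1/2}\text{-slice})\xi$, but the exact formula is not needed — the point is that the range vector factors through $x$ in the $H$-variable). It then suffices to show $\zeta\otimes x \in \theta(f)K\otimes H$. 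Since $Lx = x$, it is enough to produce, for each such $L$, the estimate $P_\zeta \otimes L \leq \theta(f)$ where $P_\zeta$ is the rank one projection onto $\bb{C}\zeta$ — equivalently $P_\zeta \leq f(L)$. By definition $f(L) = \vee\{P : P\otimes L \leq P_{\cl N}\}$, so I must check $P_\zeta \otimes L \leq P_{\cl N}$, i.e. that $P_\zeta K \otimes LH \subseteq \cl N$. For this I would show that $(w\otimes Lh) \in \cl N$ for all $w \in \bb{C}\zeta$ and $h \in H$: indeed $w\otimes Lh$ is a limit of vectors of the form $(1\otimes R_{Lh, y'})\xi'$ pulled back appropriately — more cleanly, $\cl N$ is invariant under $1\otimes\cl A$ by construction, and since $\zeta\otimes x \in \cl N$ (just shown, for the given $L$ via $R_{x,y}$), applying operators $1\otimes R_{Lh,x}\in\cl A$ (valid since $L(Lh)=Lh$ and $L_- x \leq L_- \cdot(\text{something})$ — one checks $R_{Lh,x}\in\Alg\cl M$ using $Lh \in LH$ and the same $L$) sends $\zeta\otimes x$ to $\|x\|^2\,\zeta\otimes Lh$, so indeed $\zeta\otimes Lh \in \cl N$ for all $h$, giving $P_\zeta\otimes L \leq P_{\cl N}$ and hence $P_\zeta \leq f(L)$. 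Summing over all the rank one operators in $\cl A$ and invoking rank one density then yields $P_{\cl N} \leq \theta(f)$.

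The main obstacle I anticipate is the bookkeeping in the middle step: passing from the weak* density of rank one operators in $\cl A$ to a statement about the single vector $(1\otimes A)\xi$ lying in a fixed closed subspace, and then correctly identifying which rank one operators $R_{Lh,x}$ actually lie in $\Alg\cl M$ so that the invariance argument closes up. One has to make sure the vector $\zeta$ (the "$K$-part" of $(1\otimes R_{x,y})\xi$) does not depend on $h$, only on $\xi$ and $x,y$, so that a single $P_\zeta$ works for all $h$ with the given $L$; and one must handle the case $\zeta = 0$ (then there is nothing to prove) and sum correctly over the possibly many different $L$'s arising from different rank one operators, using that $\phi_{\cl M,\cl P}$ takes joins to joins in the relevant sense (or just that $f(L)$ is defined as a supremum, so each contribution is automatically absorbed). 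Once these technical points are pinned down, combining them with the trivial inclusion $\theta(f)\leq P_{\cl N}$ gives $\theta(f) = P_{\cl N}$ as required.
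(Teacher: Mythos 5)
Your overall strategy (take $f=\phi_{\cl M,\cl P}(P_{\cl N})$, get $\theta(f)\leq P_{\cl N}$ for free, and reduce the reverse inclusion to rank one operators via weak* density -- which is legitimate, since $A\mapsto \sca{(1\otimes A)\xi,\eta}$ is a normal functional for each $\eta$) is sound, but the key verification step contains a genuine error. You claim that $R_{Lh,x}\in\Alg\cl M$ ``using $Lh\in LH$ and the same $L$''. By Longstaff's criterion this would require some $L'\in\cl M$ with $L'(Lh)=Lh$ and $L'_-x=0$; knowing only $Lx=x$ gives no control on $L_-x$, and in general no such $L'$ exists. For instance, if $\cl M$ is a continuous nest (which has the rank one density property by Erdos' density theorem), then $L_-=L$, so $L_-x=x\neq 0$; and since $Lh$ ranges over all of $LH$ one would need $L\leq L'$ together with $x\perp L'H$, impossible because $x\in LH\subseteq L'H$. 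So the invariance argument via $1\otimes R_{Lh,x}$ does not close up, and with it the crucial containment $P_\zeta\otimes L\leq P_{\cl N}$ is unproved as written.

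The repair is the variant you mentioned parenthetically and then abandoned: keep $y$, not $x$, as the second leg. For every $h\in H$ the operator $R_{Lh,y}$ \emph{does} lie in $\Alg\cl M$ (same $L$: $L(Lh)=Lh$ and $L_-y=0$), and a direct computation with $\xi=\sum_j e_j\otimes x_j$ gives $(1\otimes R_{Lh,y})\xi=\zeta\otimes Lh$ with the same $\zeta=\sum_j(x_j,y)e_j$; hence $\zeta\otimes Lh\in(1\otimes\cl A)\xi\subseteq\cl N$ for all $h$, with no invariance argument needed, which yields $P_\zeta\otimes L\leq P_{\cl N}$, i.e. $P_\zeta\leq f(L)$, and the rest of your proof goes through. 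This repaired argument is essentially the paper's proof: there $f(L)$ is defined outright as the projection onto $\overline{\left\{\sum_{j}(x_j,q)e_j : q\in H,\ L_-q=0\right\}}$, and both inclusions become the computation $(1\otimes R_{p,q})\xi=\bigl(\sum_j(x_j,q)e_j\bigr)\otimes p$ for $p\in LH$, $q\in(L_-)^\perp H$, combined with the Hahn--Banach/weak* density reduction you also use; your choice $f=\phi_{\cl M,\cl P}(P_{\cl N})$ is a cosmetic difference, not a different method.
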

\begin{proof}
Let $\xi = {\sum}_{j=1}^{\infty} e_j \otimes x_j$, where $(e_j)_{j \in
\mathbb{N}}$ is an orthonormal basis of $K$ and $(x_j)_{j \in
\mathbb{N}}$ is a square-summable sequence in $H$.
Let $f\in m(\cl M,\cl P)$ be the mapping which sends
the projection $L \in \cl M$ to the projection $f(L)$ onto the subspace
$$\overline{\left\{ \sum_{j=1}^{\infty} (x_j,q) e_j : q\in H, L_{-}q = 0 \right\}}.$$

Let $\cl R$ be the rank one subspace of $\cl A$ and
$F\in \cl R$. By \cite{Stronglyreflat},
there exists $m\in \bb{N}$, pairwise distinct projections
$L_i \in \mathcal{M}$, $i=1,\dots ,m$,
and vectors $p^{(i)}_{k} = L_i p^{(i)}_{k}$ and
$q^{(i)}_k = {(L_i)}_{-}^{\perp} q^{(i)}_k$,
$k=1, \dots, l_i$, $l_i\in \bb{N}$, such that $F = {\sum}_{i=1}^m
({\sum}_{k=1}^{l_i} R_{p^{(i)}_{k},q^{(i)}_k})$.
We have
$$(I \otimes F) (\xi) = \sum_{i=1}^{m} \left(\sum_{k=1}^{l_i}
\left(\left(I \otimes R_{p^{(i)}_{k},q^{(i)}_k}\right)
\left(\sum_{j=1}^{\infty} e_j \otimes x_j\right)\right)\right)$$
$$= \sum_{i=1}^m \left(\sum_{k=1}^{l_i}\left( \sum_{j=1}^{\infty} (x_j, q^{(i)}_{k}) e_j \right) \otimes
p^{(i)}_{k}\right).$$ It follows that $(I \otimes F) (\xi) \in
\theta (f)(K \otimes H)$ and since $F$ is an arbitrary element of
$\cl R$, we have that $(1 \otimes \mathcal{R}) \xi \subseteq  \theta
(f)(K \otimes H)$. The property $\overline{\cl R}^{w^*} = \cl A$
easily implies that $\overline{1\otimes \cl R}^{w} = 1\otimes \cl
A$. A standard application of Hahn-Banach's Theorem shows that
$\overline{(1\otimes\cl A)\xi} = \overline{(1\otimes\cl R)\xi}$.
Thus, $\overline{(1 \otimes \mathcal{A}) \xi} \subseteq \theta (f)(K
\otimes H)$.

On the other hand,
if $L \in \mathcal{M}$, $p \in LH$ and $q \in (L_{-})^{\perp} H$, then
$$\left({\sum}_{j=1}^{\infty} (x_j, q) e_j\right) \otimes p
= (I \otimes R_{p,q}) \xi \in \overline{(1 \otimes \mathcal{A})
\xi}.$$ Hence, $(f(L) \otimes L)(K \otimes H) \subseteq \overline{(1
\otimes \mathcal{A}) \xi}$ and so we have that
$\theta (f)(K \otimes H) \subseteq \overline{(1 \otimes \mathcal{A}) \xi}$; thus,
$\overline{(1 \otimes \mathcal{A}) \xi} = \theta (f)(K \otimes H)$
and the proof is complete.
\end{proof}

\begin{theorem}\label{A}
Let $\cl M$ be a subspace lattice with the rank one
density property.
The restriction of the map $\phi = \phi_{\cl M,\cl P}$ to
$\Lat(1\otimes \cl A)$ is injective, $\wedge$-preserving and
\begin{equation}\label{eq_lat1}
\theta \circ \phi|_{\Lat(1\otimes \cl A)} = \id|_{\Lat(1\otimes \cl A)}.
\end{equation}
In particular, $\cl M$ has property (p) and every element of
$\cl P\otimes\cl M$ has the form $\vee_{M\in \cl M} f(M)\otimes M$,
for some map $f : \cl M\rightarrow \cl P$.
\end{theorem}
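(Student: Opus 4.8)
The plan is to prove the identity \eqref{eq_lat1} first, since injectivity of $\phi|_{\Lat(1\otimes\cl A)}$ is then immediate, $\wedge$-preservation needs only a short separate argument, and property (p) together with the stated form of the elements of $\cl P\otimes\cl M$ follow by combining \eqref{eq_lat1} with the (trivial) inclusion $\cl P\otimes\cl M\subseteq\Lat(1\otimes\cl A)$.

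Fix $Q\in\Lat(1\otimes\cl A)$ and put $f=\phi(Q)$, so that $f(M)=\vee\{P\in\cl P: P\otimes M\leq Q\}$ for $M\in\cl M$. The inclusion $\theta(f)\leq Q$ is formal: for each $M$ one has $f(M)\otimes M=\vee\{P\otimes M: P\otimes M\leq Q\}\leq Q$, and taking the join over $M$ gives $\theta(f)\leq Q$. For the opposite inequality I would note that $Q(K\otimes H)$ is the closed linear span of the cyclic subspaces $\overline{(1\otimes\cl A)\xi}$ with $\xi$ ranging over $Q(K\otimes H)$: each such subspace contains $\xi$ because $1\otimes\cl A$ is unital, and is contained in $Q(K\otimes H)$ because $Q\in\Lat(1\otimes\cl A)$. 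By Lemma \ref{l_cyclic}, for each such $\xi$ there is $g_\xi\in m(\cl M,\cl P)$ with $\theta(g_\xi)$ equal to the projection onto $\overline{(1\otimes\cl A)\xi}$; since $g_\xi(M)\otimes M\leq\theta(g_\xi)\leq Q$, the definition of $f$ forces $g_\xi(M)\leq f(M)$ for every $M\in\cl M$, whence $\theta(g_\xi)\leq\theta(f)$ because $\theta$ is monotone (Proposition \ref{ultralemma2}). As $\xi\in\theta(g_\xi)(K\otimes H)$, we conclude $\xi\in\theta(f)(K\otimes H)$ for every $\xi\in Q(K\otimes H)$, i.e.\ $Q\leq\theta(f)$. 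This establishes \eqref{eq_lat1}, and injectivity of $\phi|_{\Lat(1\otimes\cl A)}$ follows at once.

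For $\wedge$-preservation, given $Q_1,Q_2\in\Lat(1\otimes\cl A)$ I would verify the pointwise identity $\phi(Q_1\wedge Q_2)(M)=\phi(Q_1)(M)\wedge\phi(Q_2)(M)$ directly from the definition: the inequality $\leq$ is clear, and for $\geq$ one sets $R=\phi(Q_1)(M)\wedge\phi(Q_2)(M)$ and uses $R\otimes M\leq\phi(Q_i)(M)\otimes M\leq Q_i$ for $i=1,2$ (the last step exactly as in the previous paragraph) to obtain $R\otimes M\leq Q_1\wedge Q_2$, hence $R\leq\phi(Q_1\wedge Q_2)(M)$.

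It remains to deduce property (p) and the description of $\cl P\otimes\cl M$. The inclusion $\cl P\otimes\cl M\subseteq\Lat(1\otimes\cl A)$ holds because each generator $P\otimes M$ ($P\in\cl P$, $M\in\cl M$) is invariant under every $I\otimes A$, $A\in\cl A$, and $\Lat(1\otimes\cl A)$ is a subspace lattice; conversely, if $Q\in\Lat(1\otimes\cl A)$ then by \eqref{eq_lat1} $Q=\theta(\phi(Q))=\vee_{M\in\cl M}\phi(Q)(M)\otimes M$, which is a join of elements of $\cl P\otimes\cl M$, so $Q\in\cl P\otimes\cl M$ since a subspace lattice is closed under arbitrary joins. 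Thus $\cl P\otimes\cl M=\Lat(1\otimes\cl A)$, which by \cite[Proposition 4.2]{todorovshulman1} (quoted after Definition \ref{d_p}) means $\cl M$ has property (p); and the formula $Q=\vee_{M\in\cl M}\phi(Q)(M)\otimes M$ is precisely the asserted form with $f=\phi(Q)$. I expect the one genuinely delicate point to be the inclusion $Q\leq\theta(\phi(Q))$, where the rank one density property enters through Lemma \ref{l_cyclic}; everything else is bookkeeping with joins of tensor-product projections.
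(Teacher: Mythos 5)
Your proof is correct, and its core coincides with the paper's: the identity $\theta\circ\phi=\id$ on $\Lat(1\otimes\cl A)$ is obtained, as in the paper, from Lemma \ref{l_cyclic} together with the maximality of $\phi(Q)(M)$ among projections $P$ with $P\otimes M\leq Q$; you merely phrase it directly (every $\xi\in Q(K\otimes H)$ lies in $\theta(\phi(Q))(K\otimes H)$ because the cyclic projection $\theta(g_\xi)$ is dominated by $\theta(\phi(Q))$) instead of arguing by contradiction, and the deduction of property (p) and of the representation $Q=\vee_{M\in\cl M}\phi(Q)(M)\otimes M$ is the same. Where you genuinely diverge is the $\wedge$-preservation: the paper derives it from (\ref{eq_lat1}) via the order-reflecting property (\ref{eq_claim}) of $\phi$ on $\cl P\otimes\cl M$ and a detour through $\theta$, whereas you verify the pointwise identity $\phi(Q_1\wedge Q_2)(M)=\phi(Q_1)(M)\wedge\phi(Q_2)(M)$ formally from the definition of $\phi$ together with the elementary fact $\phi(Q)(M)\otimes M\leq Q$; this is more elementary and does not even invoke the rank one density hypothesis. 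One small caveat: you state only binary meets, while the theorem's $\wedge$-preservation is later used for arbitrary families (e.g.\ in the proof of Lemma \ref{4elgen02}); your argument extends verbatim by taking $R=\wedge_{\alpha}\phi(Q_\alpha)(M)$, so this is a matter of wording rather than a gap, but you should record the general version explicitly.
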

\begin{proof}
Let $Q \in \Lat(1 \otimes \mathcal{A})$ and $P_L = f_Q(L)$, $L \in
\mathcal{M}$ (see (\ref{eq_uli}) for the definition of the map
$f_Q$). Obviously $$N \overset{def}{=} \theta(\phi(Q))= \underset{L
\in \mathcal{M}}{\vee}(P_L \otimes L) \leq Q.$$ Assume, by way of
contradiction, that there exists $\xi \in Q (K \otimes H) \backslash
N (K \otimes H)$. By Lemma \ref{l_cyclic}, there exists $f\in m(\cl
M,\cl P)$ such that $\overline{(1 \otimes \mathcal{A}) \xi} = \theta
(f)(K \otimes H)$. There exists $M \in \mathcal{M}$ such that $f(M)
\nleq P_M$, for otherwise we would have that $\xi \in N (K \otimes
H)$. Thus,
$$\underset{L \in \mathcal{M}}{\vee}((f(L) \vee P_L) \otimes L) =
(\underset{L \in \mathcal{M}}{\vee}(P_L \otimes L)) \vee
(\underset{L \in \mathcal{M}}{\vee}(f(L) \otimes L))\leq Q;$$ in
particular we have that $(f(M) \vee P_M) \otimes M \leq Q$,
contradicting the maximality of $P_M$. This proves that $Q = N = \theta (\phi(Q))$.

Since the range of $\theta$ is contained in $\cl P\otimes \cl M$,
(\ref{eq_lat1}) implies that
$\Lat (1 \otimes \cl A) \subseteq \mathcal{P} \otimes \cl M$.
Since the converse inclusion is trivial, we conclude
that $\Lat (1 \otimes \cl A) = \mathcal{P} \otimes \cl M$, that is, that
$\cl M$ has property (p).

We next observe that
if $E_1, E_2 \in \mathcal{P} \otimes \mathcal{M}$ then
\begin{equation}\label{eq_claim}
E_1 \leq E_2 \ \Longleftrightarrow \ \phi(E_1) \leq \phi (E_2).
\end{equation}
Indeed, if $\phi(E_1) \leq \phi (E_2)$ then, by (\ref{eq_lat1}),
$$E_1 = \theta(\phi(E_1)) \leq \theta(\phi (E_2)) = E_2.$$
The converse direction follows directly from the definition of $\phi$,
and (\ref{eq_claim}) is proved.

It follows from (\ref{eq_claim}) that $\phi|_{\cl P \otimes\cl M}$ is injective.
It remains to show that $\phi$ is $\wedge$-preserving.
Let $\{E_j\}_{j\in J} \subseteq \cl P \otimes\cl M$,
$f_j = \phi(E_j), j \in J$, and $f = \phi(\underset{j \in
J}{\wedge}E_j)$. By (\ref{eq_claim}) and the fact that
$\underset{i \in J}{\wedge}E_i \leq E_j$ for all $j \in J$, we have
that $f \leq f_j$ for all $j\in J$. Thus,
\begin{equation}\label{eq_fle}
f \leq \underset{i \in J}{\wedge} f_i.
\end{equation}
Now let $g = \phi(\theta(\wedge_{i\in J} f_i))$.
By the definition of $\phi$, we have that $\wedge_{i\in J} f_i \leq g$.
On the other hand, for every $j\in J$, we have by (\ref{eq_lat1}) that
$$\theta(\wedge_{i\in J} f_i) \leq \theta(f_j) = \theta(\phi(E_j)) = E_j.$$
Hence, $\theta(\wedge_{i\in J} f_i) \leq \wedge_{i\in J} E_i$.
By (\ref{eq_claim}), $g \leq f$ and hence $\wedge_{i\in J} f_i \leq f$;
now (\ref{eq_fle}) implies that
$\wedge_{i\in J} f_i = f$, showing that $\phi$ is $\wedge$-preserving.
\end{proof}

\noindent {\bf Remarks (i) } In Theorem \ref{A}, the assumption that
$\cl M$ have the rank one density property is essential. Indeed, let
$\cl D_0$ (resp. $\cl D$) be the multiplication masa of
$L^{\infty}(0,1)$ (resp. $L^{\infty}([0,1]^2)$) acting on $L^2(0,1)$
(resp. $L^2(0,1)\otimes L^2(0,1)$), and let $\cl N_0$ (resp. $\cl
N$) be the projection lattice of $\cl D_0$ (resp. $\cl D$). We have
that $\cl N \equiv \cl N_0 \otimes \cl N_0 \subseteq \cl P \otimes
\cl N_0$. For a measurable subset $\gamma$ of $(0,1)$ or of
$(0,1)^2$, we write $M_{\gamma}$ for the projection of
multiplication by the characteristic function of $\gamma$.

Let $C$ be a non-null Cantor
subset of $[0,1)$ and equip $[0,1)$ with the group operation of
addition$\mod1$. Set
$$F = \{(x,y) \in [0,1) \times [0,1) : x-y \in  C\}.$$
The set $F$ is clearly non-null; we claim that it does not contain
any non-trivial measurable rectangles. Indeed, suppose, by way of
contradiction, that there exist non-null measurable subsets $\alpha$
and $\beta$ of $[0,1)$, such that $\alpha \times \beta \subseteq F$.
It follows by the definition of $F$ that $\alpha - \beta = \{a-b : a
\in \alpha, b \in \beta\}$ is contained in $C$. By a well-known
version of Steinhaus' Theorem, we have that $\alpha - \beta$
contains an open interval. However, $\alpha - \beta \subseteq C$ and
$C$ has empty interior, a contradiction. Thus, $F$ does not contain
any non-trivial measurable rectangles and hence there exist no
non-null subsets $\alpha$ and $\beta$ of $[0,1)$ such that
$M_{\alpha} \otimes M_{\beta} \leq M_F$.

We will prove that $\phi(M_F)(M_{\beta}) = 0$ for every measurable
$\beta$. Fix such a $\beta$ and set $P = \phi(M_F)(M_{\beta})$. Let
$P_1$ be the projection onto $\overline{\cl D_0 PK}$; then $P_1\in
\cl N_0$ and $P\leq P_1$. Since $P_1\otimes M_{\beta}\leq M_F$, we
have that $P_1 = 0$, showing that $P = 0$. It follows that identity
(\ref{eq_lat1}) from Theorem \ref{A} does not hold in the case $\cl
M = \cl N_0$.

\medskip

{\bf (ii) }
Let $\cl M$ be an ABSL with the rank one density property,
and $E_1$ and $E_2$ be atoms of $\cl M$. Also let $L_i \in
\cl P$, $i=1,2$, be such that $L_1 \wedge L_2 \neq 0$ and $M = (L_1
\otimes E_1) \vee (L_2 \otimes E_2)$.
Clearly
$$M = (L_1 \otimes E_1) \vee (L_2 \otimes E_2) \vee ((L_1 \wedge
L_2) \otimes (E_1 \vee E_2))$$ and thus the representation in
Theorem \ref{A} is not unique.

The map $\phi|_{\cl P \otimes \cl M}$ is not a $\vee$-preserving
and thus not a lattice homomorphism. Indeed, it is easy to check that $\phi
(L_i \otimes E_i)(E_i) = L_i$, $i= 1,2$, and $\phi (L_i \otimes
E_i)(E_1\vee E_2) = 0$. Also, $\phi (M)(E_1 \vee E_2) \geq L_1 \wedge L_2 >
0$.
Thus,
$$\phi (M)(E_1 \vee E_2) \neq 0 = (\phi (L_1 \otimes
E_1)(E_1 \vee E_2)) \vee (\phi (L_2 \otimes E_2)(E_1 \vee E_2))$$
and hence $\phi (M) \neq (\phi (L_1 \otimes E_1)) \vee (\phi (L_2
\otimes E_2))$.

\bigskip

Theorem \ref{A} can now be extended as follows.

\begin{theorem}\label{4elcsl3}
Let $\mathcal{L}$ be a separably acting CSL and $\mathcal{M}$ be a subspace lattice
with the rank one density property. Then $\mathcal{L}
\otimes \mathcal{M}$ possesses property (p).
\end{theorem}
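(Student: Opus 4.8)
The plan is to reduce the statement about $\cl L \otimes \cl M$ to the already-established Theorem \ref{A} by exploiting the fact that a separably acting CSL $\cl L$ sits inside the full projection lattice $\cl P$ of a separable Hilbert space, and that $\Alg \cl L$ contains enough operators (in fact, it is reflexive and is the weak* closed span together with the masa generated by $\cl L$, but more to the point, $\cl P \otimes \cl M = \Lat(1 \otimes \cl A)$ already has a completely explicit description). Concretely: recall from the discussion after Definition \ref{d_p} that $\cl L \otimes \cl M$ has property (p) if and only if $\cl P_{\ell^2} \otimes (\cl L \otimes \cl M) = \Lat(1 \otimes \Alg(\cl L \otimes \cl M))$. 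Since $\cl P_{\ell^2} \otimes (\cl L \otimes \cl M) = (\cl P_{\ell^2} \otimes \cl L) \otimes \cl M$ (associativity of the lattice tensor product on elementary tensors, which generate both sides) and $\cl P_{\ell^2} \otimes \cl L = \cl P_{\ell^2 \otimes K}$ when $\cl L$ is a CSL — here I would invoke that a separably acting CSL $\cl L$ has property (p), which is a known consequence of \cite{todorovshulman1} and makes $\cl P \otimes \cl L$ reflexive — wait, that is not quite $\cl P_{\ell^2 \otimes K}$. Let me restructure.

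The cleaner route: set $\cl B = \Alg \cl L$ and $\cl A = \Alg \cl M$, so $\Alg(\cl L \otimes \cl M) \supseteq \cl B \otimes \cl A$. The first step is to show $\cl P \otimes (\cl L \otimes \cl M) = \Lat(1 \otimes (\cl B \otimes \cl A))$, and by reassociating the Hilbert space tensor factors and using $\cl P \otimes \cl L \subseteq \cl P_{K'}$ for an appropriate separable $K'$, I would argue that $1 \otimes (\cl B \otimes \cl A)$, viewed on $\ell^2 \otimes K \otimes H$, contains $1 \otimes \cl A$ (acting with the first two tensor legs as "the $\ell^2$" and the third as $H$). Then Theorem \ref{A} applies with the Hilbert space $\ell^2$ replaced by $\ell^2 \otimes K$: every element of $\Lat(1 \otimes \cl A)$ has the form $\vee_{M \in \cl M} g(M) \otimes M$ with $g : \cl M \to \cl P_{\ell^2 \otimes K}$, and we additionally know from the CSL case that the first-leg projections $g(M)$ can be taken inside $\cl P_{\ell^2} \otimes \cl L$ once we also impose invariance under $\cl B$ on the middle leg. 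The key computational step is therefore: given $Q \in \Lat(1 \otimes (\cl B \otimes \cl A))$, apply Theorem \ref{A} to get $Q = \vee_M g(M) \otimes M$, then observe that invariance under $1 \otimes (B \otimes I)$ for $B \in \cl B$ forces each $g(M)$ to be invariant under $1 \otimes B$, hence $g(M) \in \cl P_{\ell^2} \otimes \Lat \cl B = \cl P_{\ell^2} \otimes \cl L$ (using that $\cl L$ is reflexive, which holds for CSLs by Arveson's theorem), and therefore $g(M) \otimes M \in \cl P_{\ell^2} \otimes (\cl L \otimes \cl M)$, giving $Q \in \cl P_{\ell^2} \otimes (\cl L \otimes \cl M)$. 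The reverse inclusion $\cl P_{\ell^2} \otimes (\cl L \otimes \cl M) \subseteq \Lat(1 \otimes \Alg(\cl L \otimes \cl M))$ is automatic.

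To make the middle step rigorous I would use Lemma \ref{l_cyclic} more carefully rather than only Theorem \ref{A}: for $\xi \in Q(\ell^2 \otimes K \otimes H)$, the cyclic subspace $\overline{(1 \otimes \cl A)\xi} = \theta(f)$ for an explicit $f \in m(\cl M, \cl P_{\ell^2 \otimes K})$ whose values are spans of vectors $\sum_j (x_j, q) e_j$; since $Q$ is also invariant under $1 \otimes (\cl B \otimes I)$, running the same computation with $B \otimes R_{p,q}$ in place of $R_{p,q}$ shows the relevant first-leg subspace is invariant under $\cl B$ acting on the $K$-leg, so it lies in $\cl P_{\ell^2} \otimes \Lat \cl B$. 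Assembling these, $Q = \theta(\phi(Q))$ with $\phi(Q)(M) \in \cl P_{\ell^2} \otimes \cl L$ for every $M$, which is exactly membership in $\cl P_{\ell^2} \otimes (\cl L \otimes \cl M)$ by Proposition \ref{ultralemma2} applied to the $\cl L \otimes \cl M$-valued map $M \mapsto \phi(Q)(M) \otimes M$...

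The main obstacle I anticipate is the associativity/identification $\cl P_{\ell^2} \otimes (\cl L \otimes \cl M) = (\cl P_{\ell^2} \otimes \cl L) \otimes \cl M$ at the level of \emph{lattices} (not just generators), together with the claim that $g(M)$ lands in $\cl P_{\ell^2} \otimes \cl L$ rather than merely being $\cl B$-invariant in a weaker sense; controlling the $\vee$ over the (possibly uncountable) index set $\cl M$ while keeping each summand in the smaller tensor lattice is where the strong-operator-closedness subtleties live, and this is precisely where one needs $\cl L$ to be a CSL (so that $\cl P_{\ell^2} \otimes \cl L$ is well-behaved and reflexive) rather than an arbitrary lattice. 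I expect the proof in the paper handles this by first treating $\cl L$ via its known property (p) and then bootstrapping through Theorem \ref{A}, so I would follow that two-tiered structure.
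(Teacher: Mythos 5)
Your overall strategy (reduce to Theorem \ref{A} applied on the regrouped space $\ell^2\otimes K\otimes H$, then pull the first-leg projections $g(M)=\phi(Q)(M)$ back into $\cl P_{\ell^2}\otimes\cl L$ using invariance under $1\otimes\cl B\otimes I$) is a genuinely different route from the paper's, but as written it has a critical gap at exactly the point where the CSL hypothesis must do its work. Showing that each $g(M)$ is invariant under $1\otimes\cl B$ only puts $g(M)$ in $\Lat(1\otimes\Alg\cl L)$; the passage from there to $\cl P_{\ell^2}\otimes\Lat\cl B=\cl P_{\ell^2}\otimes\cl L$ is \emph{not} a consequence of reflexivity of $\cl L$ (Arveson's theorem gives $\Lat\Alg\cl L=\cl L$, not $\Lat(1\otimes\Alg\cl L)=\cl P\otimes\cl L$) --- it is precisely property (p) for $\cl L$, i.e.\ the special case $\cl M=\{0,I\}$ of the very theorem you are proving. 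Your sketch wavers between this invalid justification and an appeal to \cite{todorovshulman1}; the latter could in principle carry the reduction (the introduction indicates the CSL case is the result being generalised), but then you must quote it precisely rather than derive it from reflexivity. In addition, the steps you yourself flag as ``the main obstacle'' are left undone: the identification of $\cl P_{\ell^2}\otimes(\cl L\otimes\cl M)$ with the lattice generated by triple elementary tensors, the fact that $R\mapsto R\otimes M$ carries $\cl P_{\ell^2}\otimes\cl L$ into $\cl P_{\ell^2}\otimes\cl L\otimes\cl M$, and the closure of the latter under the (possibly uncountable) join $\vee_{M\in\cl M}g(M)\otimes M$. These are fixable (slice-map arguments show $R\mapsto R\otimes M$ is an SOT-continuous complete lattice morphism, so the preimage of a strongly closed complete lattice is again one, and strongly closed subspace lattices are complete), and your claim that invariance of $Q$ under $1\otimes\cl B\otimes I$ forces $g(M)\in\Lat(1\otimes\cl B)$ is correct (replace each $P$ with $P\otimes M\leq Q$ by the projection onto $\overline{(1\otimes\cl B)P(\ell^2\otimes K)}$ and take the join), but none of this is carried out, so the proposal is a programme rather than a proof.

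For comparison, the paper avoids all of these issues and does not bootstrap through a ``CSLs have property (p)'' statement at all: it first treats \emph{finite} CSLs, which are totally atomic, where reflexivity of $\cl L\otimes\cl P\otimes\cl M$ follows from Harrison's tensor product theorem \cite{Harrison} combined with the reflexivity of $\cl P\otimes\cl M$ from Theorem \ref{A}; it then writes an arbitrary separably acting CSL as the strong closure of an increasing union of finitely generated (hence finite) sublattices $\cl L_n$, notes $\cl L\otimes\cl M=\vee_n(\cl L_n\otimes\cl M)$, and concludes by the strict approximativity of property (p) \cite[Proposition 4.1]{todorovshulman2}. If you want to salvage your approach, the two things to supply are a precise citation (or proof) of property (p) for separably acting CSLs and the lattice-tensor bookkeeping described above; alternatively, adopt the paper's finite-plus-approximation scheme, which requires neither.
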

\begin{proof}
By Theorem \ref{A}, $\mathcal{P}
\otimes \mathcal{M}$ is reflexive.
If $\mathcal{L}$ is a finite CSL then $\cl L$ is totally
atomic and \cite[Theorem 12, Corollary 2]{Harrison} imply that
$\mathcal{L} \otimes \mathcal{P} \otimes \mathcal{M}$ is reflexive.
Hence, $\mathcal{L} \otimes \mathcal{M}$ has property (p).

Now let $\mathcal{L}$ be an arbitrary separably acting CSL, $\{L_i\}_{i \in
\bb{N}}$ be a strongly dense subset of $\cl L$, and $\cl L_n$ be the
subspace lattice generated by the set $\{L_i\}_{i=1}^n$, $n\in
\bb{N}$; clearly, $\cl L_n$ is finite for all $n \in \bb{N}$.
Since $\cl L = \overline{\underset{n \in \bb{N}}{\cup}\cl L_n}^s$, we have that
$\mathcal{L} \otimes \mathcal{M} = \underset{n \in \mathbb{N}}{\vee} ({\mathcal{L}}_n \otimes
\mathcal{M})$. By the previous paragraph, ${\mathcal{L}}_n \otimes
\mathcal{M}$ has property (p) for all $n \in \mathbb{N}$. By the
strict approximativity of property (p) (see \cite[Proposition
4.1]{todorovshulman2}), $\cl L \otimes \cl M$ has property (p).
\end{proof}

\section{Tensoring with atomic Boolean subspace lattices}
\label{s_tabsl}

In this section, we restrict our attention to the case where $\cl M$
is an Atomic Boolean Subspace Lattice (ABSL) possessing the ultraweak
rank one density property. Two atom ABSLs, namely lattices of the
form $\{0,P,Q,I\}$, where $P\wedge Q = 0$ and $P\vee Q = I$, satisfy
this property \cite{Papadakis} and it is not difficult to show that
the rank one density property is preserved under taking
meshed product (see \cite{memoirs} for the definition and properties
of this construction).

Our aim is to show that, if $\cl M$ is an ABSL with the rank one density
property, $\cl E$ is the set of its atoms, and $\cl L$ is an arbitrary
subspace lattice, then the map $\theta$ is an isomorphism from
$m(\cl E,\cl L)$ onto $\cl L\otimes\cl M$.
We first establish an important special case.

\begin{lemma}\label{4elgen02}
Let $\mathcal{M}$ be an ABSL acting on a Hilbert space $H$ having
the rank one density property and let $\cl E = \{E_j : j
\in J\}$ be the set of its atoms. Then $\theta|_{m(\cl E,\cl
P)}$ is a complete lattice isomorphism of $m(\cl E,\cl P)$ onto $\cl
P\otimes\cl M$ with inverse $\phi_{\cl E,\cl P}$.
\end{lemma}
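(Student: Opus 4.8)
The plan is to show that $\theta|_{m(\cl E,\cl P)}$ and $\phi_{\cl E,\cl P}$ are mutually inverse order isomorphisms, building directly on Theorem \ref{A} and Lemma \ref{l_cyclic}. Since $\theta$ is already known to be $\vee$-preserving (Proposition \ref{ultralemma2}), and an order isomorphism between complete lattices automatically preserves arbitrary joins and meets, the crux is to establish the two identities $\phi_{\cl E,\cl P}\circ\theta = \id_{m(\cl E,\cl P)}$ and $\theta\circ\phi_{\cl E,\cl P}|_{\cl P\otimes\cl M} = \id_{\cl P\otimes\cl M}$, together with surjectivity of $\theta$ onto $\cl P\otimes\cl M$.

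First I would prove $\phi_{\cl E,\cl P}(\theta(f)) = f$ for every $f\in m(\cl E,\cl P)$. The inequality $f(E)\leq \phi_{\cl E,\cl P}(\theta(f))(E)$ for each atom $E$ is immediate from the definitions, since $f(E)\otimes E\leq \theta(f)$. For the reverse inequality one uses the distinguishing feature of atoms: if $E$ is an atom of the ABSL $\cl M$, then $E_- = \vee\{M\in\cl M : E\not\leq M\}$ is the join of all atoms other than $E$, so $E_-^\perp$ has $E$ as a subprojection and in fact, because $\cl M$ is Boolean and atomic, $E\wedge E_- = 0$; crucially, for any $P\in\cl P$ one should check that $P\otimes E\leq \theta(f)$ forces $P\leq f(E)$. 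This is where rank one density enters: I would pair $\theta(f) = \vee_j f(E_j)\otimes E_j$ against the rank one operators $R_{p,q}\in\Alg\cl M$ with $p = Ep$ and $q = E_-^\perp q$ (these exist and are plentiful by the Longstaff criterion quoted in the Preliminaries), observe via the computation in Lemma \ref{l_cyclic} that $(I\otimes R_{p,q})$ maps $\theta(f)(K\otimes H)$ into $f(E)K\otimes Ep$, and thereby conclude $P\leq f(E)$. The atomicity guarantees that the $E_j$ are pairwise "orthogonal" in the lattice sense, so the contributions of the different summands $f(E_j)\otimes E_j$ do not interfere.

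Next I would handle $\theta\circ\phi_{\cl E,\cl P} = \id$ on $\cl P\otimes\cl M$ and surjectivity. By Theorem \ref{A}, every element $Q$ of $\cl P\otimes\cl M = \Lat(1\otimes\cl A)$ has the form $Q = \vee_{M\in\cl M}g(M)\otimes M$ for some $g:\cl M\to\cl P$; restricting $g$ to $\cl E$ gives a candidate preimage, and one must show $\vee_{j}g(E_j)\otimes E_j = Q$, i.e. that the non-atomic terms $g(M)\otimes M$ are redundant. Since $M = \vee_{j\in J_M}E_j$ for each $M\in\cl M$, and since $g(M)\otimes M\leq Q$, the Longstaff-type analysis again shows $g(M)\leq \phi_{\cl E,\cl P}(Q)(E_j) = g'(E_j)$ for each atom $E_j\leq M$ (after possibly replacing $g$ by $\phi_{\cl E,\cl P}(Q)$), hence $g(M)\otimes M = g(M)\otimes\vee_{j\in J_M}E_j = \vee_{j\in J_M}g(M)\otimes E_j\leq \vee_j g'(E_j)\otimes E_j = \theta(\phi_{\cl E,\cl P}(Q))$. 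Combined with $\theta(\phi_{\cl E,\cl P}(Q))\leq Q$ (trivial) and the reverse built from the displayed form of $Q$, this yields $Q = \theta(\phi_{\cl E,\cl P}(Q))$, giving both surjectivity and the second inverse identity. Injectivity of $\theta|_{m(\cl E,\cl P)}$ is then immediate from $\phi_{\cl E,\cl P}\circ\theta = \id$.

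The main obstacle I anticipate is the passage from "$P\otimes E\leq\theta(f)$" to "$P\leq f(E)$": this is precisely the place where the interaction between the join defining $\theta(f)$ and the strong-operator closure could fail without some structural input, and where both atomicity (to separate the summands indexed by distinct atoms) and rank one density (to have enough operators in $1\otimes\Alg\cl M$ to probe $\theta(f)$) are used in an essential way. Once that separation lemma is in hand, everything else is bookkeeping with the lattice identity $M = \vee_{j\in J_M}E_j$ and the general fact that a bijective order-preserving map whose inverse is order-preserving is automatically a complete lattice isomorphism.
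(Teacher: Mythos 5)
Your proposal is correct, and its overall architecture (establish $\phi_{\cl E,\cl P}\circ\theta=\id$ on $m(\cl E,\cl P)$ and $\theta\circ\phi_{\cl E,\cl P}=\id$ on $\cl P\otimes\cl M$, then upgrade to a complete lattice isomorphism) matches the paper's; in particular your treatment of $\theta\circ\phi_{\cl E,\cl P}=\id$ --- writing $Q=\vee_{M\in\cl M}\,g(M)\otimes M$ via Theorem \ref{A} and collapsing the non-atomic terms onto the atoms --- is essentially the paper's computation, which uses (\ref{eq_ul}) for exactly this purpose. Where you genuinely differ is the identity $\phi_{\cl E,\cl P}(\theta(f))=f$: you obtain the key implication (that $P\otimes E_i\leq\theta(f)$ forces $P\leq f(E_i)$) by slicing with the operators $I\otimes R_{p,q}$, where $p=E_ip$ and $q=(E_i)_{-}^{\perp}q$, i.e.\ the probe technique the paper itself uses later (Lemma \ref{ssop1}\,(i) and the second half of Theorem \ref{4elgen3minus}); the paper instead argues by contradiction from the bound $\theta(f)\leq(M_i\otimes I)\oplus\bigl(M_i^{\perp}\otimes(\vee_{j\neq i}E_j)\bigr)$ together with the disjointness $E_i\wedge(\vee_{j\neq i}E_j)=0$, which requires knowing $P_i\otimes E_i\leq\theta(f)$ via (\ref{eq_rem}). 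Both routes are sound; yours has the mild advantage of not needing (\ref{eq_rem}) at that point, and note that the operators $R_{p,q}$ you invoke exist for any ABSL (since $(E_i)_-=\vee_{j\neq i}E_j\neq I$), so rank one density is not what powers that step --- it enters only through Theorem \ref{A}, which you use in the other half. Finally, for the completeness claim you appeal to the abstract fact that an order isomorphism of complete lattices preserves arbitrary joins and meets, whereas the paper verifies meet preservation explicitly using the $\wedge$-preservation of $\phi$ from Theorem \ref{A}; your shortcut is legitimate provided you record (as the paper implicitly does) that $\cl P\otimes\cl M$, being strongly closed, is stable under arbitrary meets and joins computed in $\cl P_{K\otimes H}$, so its internal complete-lattice operations coincide with the ambient ones the lemma refers to.
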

\begin{proof}
Let ${\mathcal{M}}_j = \{L \in \mathcal{M} : E_j \leq L\}$, $j\in J$.
Fix $M \in \cl P \otimes \cl M$ and let $f = \phi_{\cl M, \cl P}(M)$. By Theorem \ref{A},
\begin{eqnarray*}
\nonumber M &=& \underset{L \in \mathcal{M}}{\vee}(f(L) \otimes L) =
\underset{L \in \mathcal{M}}{\vee}(f(L) \otimes (\underset{E_j \leq L}{\vee} E_j)) \\
\nonumber &=& \underset{L \in \mathcal{M}}{\vee}(\underset{E_j \leq L}{\vee}(f(L) \otimes
E_j)) = \underset{j \in J}{\vee}((\underset{L \in
{\mathcal{M}}_j}{\vee} f(L)) \otimes E_j)\\
& = &
\underset{j \in J}{\vee} f(E_j) \otimes E_j,
\end{eqnarray*}
where the last identity follows from (\ref{eq_ul}).
Thus,
\begin{equation}\label{eq_rem}
(\theta\circ \phi_{\cl E,\cl P})(M) = M, \ \ \ M\in \cl P\otimes\cl M.
\end{equation}
Let $\phi = \phi_{\cl E, \cl P}|_{\cl P\otimes\cl M}$ for brevity.
We next check that
\begin{equation}\label{eq_phith}
(\phi \circ \theta)(f) = f, \ \ \ f\in m(\cl E,\cl P).
\end{equation}
Let $f\in m(\cl E,\cl P)$, $g = \phi \circ \theta (f)$, $M_j = f(E_j)$ and
$P_j = g(E_j)$, $j\in J$. Set $M = \theta(f)$; by (\ref{eq_rem}),
\begin{equation}\label{eq_nn}
M = \theta(\phi(M)) = \theta(g).
\end{equation}
By the definition of $\phi$, we have that
$f\leq g$, that is, $M_j \leq P_j$ for all $j\in J$.
Suppose that there exists $i \in J$ such that $M_i < P_i$. We have that
\begin{eqnarray*}
M &=& \underset{j \in J}{\vee} (M_j \otimes E_j) \leq (M_i \otimes
E_i) \vee
\left({\underset{j \neq i}{\bigvee}} (I \otimes E_j)\right) \\
  &=& (M_i \otimes E_i) \vee
\left(I \otimes \left({\underset{j \neq i}{\bigvee}} E_j\right)\right) \\
  &=& (M_i \otimes E_i) \vee \left(M_i
\otimes \left({\underset{j \neq i}{\bigvee}}
E_j\right)\right) \vee \left({M_i}^{\perp} \otimes \left({\underset{j \neq i}{\bigvee}} E_j\right)\right)\\
&=& (M_i \otimes I) \oplus \left({M_i}^{\perp} \otimes
\left({\underset{j \neq i}{\bigvee}}
E_j\right)\right),
\end{eqnarray*}
where for last equality we have used the fact that $M_i \otimes I$
and ${M_i}^{\perp} \otimes \left({\underset{j \neq i}{\bigvee}}
E_j\right)$ are orthogonal. Let now $0 \neq p \in (P_iK) \ominus
(M_iK)$ and $0 \neq e \in E_i H$. Using (\ref{eq_nn}), we have that
$p \otimes e \in (P_i \otimes E_i)(K \otimes H) \subseteq M(K
\otimes H)$ and that $(M_i \otimes I)(p \otimes e) = 0$. Hence
$$p \otimes e \in \left({M_i}^{\perp} \otimes \left({\underset{j \neq i}{\bigvee}} E_j\right)\right)(K \otimes H)$$
and therefore
$$0 \neq e \in \left(\left({\underset{j \neq i}{\bigvee}} E_j\right)H\right) \wedge (E_iH) = \{0\},$$
a contradiction. Hence $f = g = \phi(\theta (f))$ and (\ref{eq_phith}) is proved.

By Proposition \ref{ultralemma2}, $\theta|_{m(\cl E,\cl P)}$ is
$\vee$-preserving. By Theorem \ref{A}, $\phi$ is
$\wedge$-preserving. Let $(f_{\alpha})_{\alpha\in \bb{A}}\subseteq
m(\cl E,\cl P)$. Using (\ref{eq_phith}), we have
$$\phi(\theta(\wedge_{\alpha\in \bb{A}} f_{\alpha})) = \wedge_{\alpha\in \bb{A}} f_{\alpha}
= \wedge_{\alpha\in \bb{A}} (\phi\circ\theta)(f_{\alpha}) =
\phi(\wedge_{\alpha\in \bb{A}}\theta(f_{\alpha})).$$
By (\ref{eq_rem}), $\phi$ is injective and so $\theta(\wedge_{\alpha\in \bb{A}} f_{\alpha})
= \wedge_{\alpha\in \bb{A}}\theta(f_{\alpha})$.
The proof is complete.
\end{proof}

It will be helpful to isolate the following statement contained in
Lemma \ref{4elgen02} for future reference.

\begin{corollary}\label{c_cup}
Let $\mathcal{M}$ be an ABSL acting on a Hilbert space $H$ having
the rank one density property and let $\cl E = \{E_j : j
\in J\}$ be the set of its atoms.
If $M \in \mathcal{P} \otimes \mathcal{M}$, then there exists a
unique family $(P_j)_{j \in J} \subseteq \mathcal{P}$ such that $M
= \underset{j \in J}{\vee} (P_j \otimes E_j)$.
\end{corollary}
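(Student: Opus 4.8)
The plan is to read off the statement directly from Lemma \ref{4elgen02}, which we may freely invoke. By that lemma, $\theta|_{m(\cl E,\cl P)}$ is a complete lattice isomorphism of $m(\cl E,\cl P)$ onto $\cl P\otimes\cl M$, with inverse the restriction $\phi_{\cl E,\cl P}|_{\cl P\otimes\cl M}$. So given $M\in\cl P\otimes\cl M$, set $f=\phi_{\cl E,\cl P}(M)\in m(\cl E,\cl P)$ and put $P_j=f(E_j)$ for $j\in J$. Then
$$M=\theta(f)=\vee\{f(E)\otimes E:E\in\cl E\}=\underset{j\in J}{\vee}(P_j\otimes E_j),$$
which gives existence of the desired family $(P_j)_{j\in J}\subseteq\cl P$.

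For uniqueness, suppose $(P_j)_{j\in J}$ and $(P_j')_{j\in J}$ are two families in $\cl P$ with $\underset{j\in J}{\vee}(P_j\otimes E_j)=M=\underset{j\in J}{\vee}(P_j'\otimes E_j)$. Define $f,g\in m(\cl E,\cl P)$ by $f(E_j)=P_j$ and $g(E_j)=P_j'$. Then $\theta(f)=M=\theta(g)$, and applying the inverse map $\phi_{\cl E,\cl P}$ from Lemma \ref{4elgen02} yields $f=\phi_{\cl E,\cl P}(\theta(f))=\phi_{\cl E,\cl P}(\theta(g))=g$, hence $P_j=f(E_j)=g(E_j)=P_j'$ for all $j\in J$. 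This is the whole argument; there is essentially no obstacle, since all the work has already been done in establishing that $\theta$ and $\phi_{\cl E,\cl P}$ are mutually inverse bijections between $m(\cl E,\cl P)$ and $\cl P\otimes\cl M$ — the corollary is just a restatement of the bijectivity of $\theta$ on $m(\cl E,\cl P)$ phrased in terms of the coordinate families $(P_j)_{j\in J}$ rather than the maps $f$ themselves. The only point meriting a word of care is to note explicitly that distinct families $(P_j)$ correspond to distinct maps $f\in m(\cl E,\cl P)$, so that injectivity of $\theta$ on $m(\cl E,\cl P)$ translates into the claimed uniqueness.
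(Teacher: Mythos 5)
Your proposal is correct and follows the paper exactly: the paper offers no separate proof, stating only that the corollary is a statement "contained in Lemma \ref{4elgen02}", and your argument simply spells out how existence comes from $\theta\circ\phi_{\cl E,\cl P}=\id$ on $\cl P\otimes\cl M$ and uniqueness from $\phi_{\cl E,\cl P}\circ\theta=\id$ on $m(\cl E,\cl P)$.
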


\begin{lemma}\label{ssop1}
Let $H$ be a Hilbert space, $\cl M$ be an ABSL on $H$ with atoms
$E_j$, $j \in J$ having the rank one density property, and $\{f, f_n
: n\in \bb{N}\}\subseteq m(\cl E,\cl P)$.

(i) \ If $\theta(f_n)\rightarrow_{n\rightarrow\infty} \theta(f)$ semistrongly then $f_n(E_j)\rightarrow_{n\rightarrow\infty} f(E_j)$
semi-strongly for every $j\in J$.

(ii) If $f_n(E_j)\rightarrow_{n\rightarrow\infty} f(E_j)$ semistrongly for every $j\in J$ then there exists a
subsequence $(\theta(f_{n_k}))_{k\in \bb{N}}$ of $(\theta(f_n))_{n\in \bb{N}}$ such that
$\theta(f_{n_k}) \rightarrow_{k\rightarrow\infty} \theta(f)$ semistrongly.
\end{lemma}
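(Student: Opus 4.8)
The plan is to exploit the concrete coordinate description $M = \bigvee_{j\in J}(P_j\otimes E_j)$ from Corollary \ref{c_cup}, together with the characterization of strong convergence as two-sided semistrong convergence, and to argue one semistrong condition (a) and one semistrong condition (b) at a time. Write $P_j^{(n)} = f_n(E_j)$ and $P_j = f(E_j)$, so that $\theta(f_n) = \bigvee_j (P_j^{(n)}\otimes E_j)$ and $\theta(f) = \bigvee_j (P_j\otimes E_j)$; since the $E_j$ are mutually orthogonal atoms of the Boolean lattice $\cl M$ and $\bigvee_j E_j = I$, these joins are in fact orthogonal direct sums: $\theta(f_n)(K\otimes H) = \bigoplus_j P_j^{(n)}K\otimes E_j H$ and similarly for $\theta(f)$. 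This orthogonal decomposition is the technical heart and lets the semistrong conditions be read off coordinatewise.

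For part (i), assume $\theta(f_n)\to\theta(f)$ semistrongly. To verify condition (a) for $P_j^{(n)}\to P_j$, fix $j$ and $x\in P_jK$, pick $0\neq e\in E_jH$ with $\|e\|=1$, and apply condition (a) for $\theta(f_n)\to\theta(f)$ to the vector $x\otimes e\in\theta(f)(K\otimes H)$: there are $\eta_n\in\theta(f_n)(K\otimes H)$ with $\eta_n\to x\otimes e$. Writing $\eta_n = \sum_i \zeta_i^{(n)}\otimes e_i$ with respect to an orthonormal basis $\{e_i\}$ of $H$ adapted to the decomposition $H=\bigoplus_j E_jH$ (so each $e_i$ lies in some $E_{j(i)}H$ and $\zeta_i^{(n)}\in P_{j(i)}^{(n)}K$), one extracts from the convergence $\eta_n\to x\otimes e$ the convergence of the "$e$-component" to give a sequence in $P_j^{(n)}K$ tending to $x$; here one uses that $e\in E_jH$ and the projection onto $K\otimes E_jH$ is continuous. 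For condition (b), suppose $x_k\in P_j^{(n_k)}K$ with $x_k\to x$; then $x_k\otimes e\in\theta(f_{n_k})(K\otimes H)$ and $x_k\otimes e\to x\otimes e$, so condition (b) for $\theta$ gives $x\otimes e\in\theta(f)(K\otimes H) = \bigoplus_i P_i K\otimes E_i H$, and projecting onto the $K\otimes E_jH$ summand forces $x\in P_jK$. This proves $P_j^{(n)}\to P_j$ semistrongly.

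For part (ii), assume $P_j^{(n)}\to P_j$ semistrongly for every $j$. The obstacle — and the reason only a subsequence is claimed — is that $J$ may be infinite, so the coordinatewise "approximate a given vector" data must be assembled over all coordinates at once, with a uniformity that a single sequence need not provide. I would handle this by a diagonal argument: fix a countable dense subset $\{\xi_m\}$ of $\theta(f)(K\otimes H)$ (possible as $K\otimes H$ is separable, $K$ being so and one may reduce to the separable case, or simply work with the separable subspace generated by relevant vectors), and for each $m$ approximate $\xi_m$ by a finite sum $\sum_{j\in F_m}y_j\otimes e_j$ with $y_j\in P_jK$; by semistrong condition (a) in each of the finitely many coordinates $j\in F_m$, choose $n$ large and vectors $y_j^{(n)}\in P_j^{(n)}K$ close to $y_j$, yielding a vector in $\theta(f_n)(K\otimes H)$ close to $\xi_m$. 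A careful diagonalization over $m$ produces the subsequence $(n_k)$ along which condition (a) holds for $\theta(f_{n_k})\to\theta(f)$. For condition (b) along this subsequence: if $x_k\in\theta(f_{n_k})(K\otimes H)$ with $x_k\to x$, decompose $x_k = \sum_j P_j^{(n_k)}(\text{component})$; passing to the limit, the component of $x$ in $K\otimes E_jH$ is a limit of vectors in $P_j^{(n_k)}K\otimes E_j H$, hence lies in $P_jK\otimes E_jH$ by semistrong condition (b) for $P_j^{(n)}\to P_j$ applied coordinatewise, so $x\in\bigoplus_j P_jK\otimes E_jH = \theta(f)(K\otimes H)$. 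The main difficulty throughout is managing the possibly infinite index set $J$ when verifying condition (a); condition (b), being a "closedness" statement, passes to limits coordinatewise without extracting subsequences, and the orthogonal direct sum structure is what makes the coordinatewise reductions legitimate.
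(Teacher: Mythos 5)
There is a genuine gap, and it is structural: your entire argument rests on the claim that the atoms $E_j$ of the ABSL $\cl M$ are mutually orthogonal, so that $H=\bigoplus_j E_jH$ and $\theta(f)(K\otimes H)=\bigoplus_j P_jK\otimes E_jH$ are orthogonal direct sums admitting continuous coordinate projections. This is false for a general ABSL: lattice complements in an ABSL are not orthocomplements, and the atoms need not be pairwise orthogonal. Indeed the motivating examples in this paper are precisely the non-orthogonal ones --- the two-atom lattices $\{0,P,Q,I\}$ with $P\wedge Q=0$, $P\vee Q=I$ but $PQ\neq 0$ (the setting of \cite{decKatavolos}, \cite{Papadakis}); if the atoms were orthogonal, $\cl M$ would be a totally atomic CSL and the lemma would be essentially bookkeeping. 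Consequently every step in which you ``project onto the $K\otimes E_jH$ summand,'' expand $\eta_n$ in a basis adapted to $\bigoplus_j E_jH$, or read off condition (b) ``coordinatewise'' is unjustified; even the innocuous-looking identity $\bigl(\vee_j(L_j\otimes E_j)\bigr)\wedge(K\otimes E_kH)=L_kK\otimes E_kH$ is not an orthogonality fact but is extracted from Lemma \ref{4elgen02}, which itself depends on the rank one density hypothesis. The hypothesis plays no role in your argument, which is a sign that the real difficulty has been assumed away.

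For comparison: in part (i) the paper replaces your coordinate projection by an operator that genuinely lies in $1\otimes\Alg\cl M$, namely $I\otimes R_{p,q}$ with $p\in E_kH$ and $q\in\bigl((E_k)_-\bigr)^{\perp}$, followed by a slice map; this maps the ranges of the $\theta(f_n)$ into $f_n(E_k)K\otimes E_kH$ and yields condition (a), while condition (b) uses the meet computation from Lemma \ref{4elgen02}. In part (ii) the paper does not attempt your diagonal assembly at all (which, besides the orthogonality issue, would still have to identify the limit): it extracts a weakly convergent subsequence of $(\theta(f_n))$, invokes \cite{limsupsHalmos} to get a semistrong limit projection $Q$, uses reflexivity and semistrong closedness of $\cl P\otimes\cl M$ (Theorem \ref{A} and \cite{todorovshulman1}) to write $Q=\theta(g)$, and then identifies $g=f$ by applying part (i) and uniqueness of semistrong limits. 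You would need to rebuild your proof along these lines; as written it only covers the case of a CSL with orthogonal atoms.
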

\begin{proof}
Let $L_n^j = f_n(E_j)$ and $L_j = f(E_j)$, $j\in J$, $n\in \bb{N}$.

(i) Fix $k \in J$ and let $(x_i)_{i \in \mathbb{N}}$ be a sequence
such that $x_i \in L_{n_i}^kK$, $i \in \mathbb{N}$, and $x_i
\rightarrow x$ (where the sequence $(n_i)_{i \in \bb N}\subseteq
\bb{N}$ is strictly increasing). Fix a non-zero vector $p \in E_kH$.
It follows that $x_i \otimes p \rightarrow x \otimes p$. Clearly,
$x_i \otimes p \in \theta(f_{n_i})(K \otimes H) $ for all $i \in
\mathbb{N}$ and thus, by hypothesis, $x \otimes p \in \theta(f)(K
\otimes H)$. Let
$$\cl W = \{y : y \otimes p \in \theta(f)(K\otimes H) \text{ for all } p \in E_kH\}.$$
Clearly, $\cl W$ is a closed subspace such that
$L_k K\subseteq \cl W$ and $x \in \cl W$.
Also, $\cl W \otimes E_k H \subseteq \theta(f)(K \otimes H)$.
By Lemma \ref{4elgen02},
$$\cl W \otimes E_kH \subseteq ((\underset{j \in
J}{\vee} (L_j \otimes E_j))(K \otimes H)) \wedge (K \otimes E_kH) =
L_kK \otimes E_kH.$$
It follows that $\cl W\subseteq L_kK$ and so $x\in L_kK$.

Let $q$ be a non-zero vector in $H$ such that $(\underset{j \neq
k}{\vee}E_j) q = (E_k)_{-}q = 0$. Write $q = p_0 + p_0'$ where $p_0
= E_k p_0$ and $E_k p_0' = 0$. Since $E_k^{\perp} \wedge
(\underset{j \neq k}{\vee}E_j)^{\perp} = (E_k \vee (\underset{j \neq
k}{\vee}E_j))^{\perp} = 0$, it follows that $p_0 \neq 0$ and thus
$(p_0,q) \neq 0$. Let $p = \frac{p_0}{(p_0,q)}$; we have that
$R_{p,q} \in \Alg \cl M$. Clearly $R_{p,q} p = p$ and $R_{p,q}$
annihilates $\underset{j \neq k}{\vee}E_j$. Fix $x \in L_kK$. By
hypothesis, there exist a sequence $(\xi_n)_{n \in \mathbb{N}}$ such
that $\xi_n = \theta(f_n) \xi_n$, $n\in \bb{N}$, and $\xi_n
\rightarrow_{n\rightarrow\infty} x\otimes p$. Thus,
$$x\otimes p = (I\otimes R_{p,q})(x\otimes p) = \lim_{n\rightarrow \infty} (I\otimes R_{p,q})\xi_n.$$
By the definition of $R_{p,q}$, we have that $(I\otimes
R_{p,q})\xi_n \in (L_n^k\otimes E_k)(K\otimes H)$. Let $\psi : K
\otimes H\rightarrow K$ be the bounded linear operator such that
$$\psi(x_1 \otimes x_2) = \frac{(x_2,p)}{\|p\|^2}x_1, \text{ } x_1
\in K, \  x_2 \in H.$$ Clearly, $\psi (I \otimes R_{p,q}) \xi_n \in
L_n^k K$ for all $n \in \mathbb{N}$, and $\psi((I \otimes
R_{p,q})\xi_n)\rightarrow \psi(x\otimes p) = x$. This shows that
$L_n^k \rightarrow L_k$ semistrongly.

\smallskip

(ii) Suppose that $f_n(E_j)\rightarrow f(E_j)$ semistrongly for all $j\in J$.
By the weak compactness of the unit ball of
$\cl B(H)$ (see, e.g. \cite[Proposition 5.5]{conway}), there exists a subsequence
$(\theta(f_{n_k}))_{k\in \bb{N}}$ of $(\theta(f_n))_{n\in \bb{N}}$
and a positive contraction $W$ on $H$ such that
$\theta(f_{n_k}) \rightarrow_{k\rightarrow\infty} W$ in the weak operator topology.
By \cite{limsupsHalmos}, $(\theta(f_{n_k}))_{k\in \bb{N}}$ converges
semistrongly to the orthogonal projection $Q$ onto $\ker (I-W)$.
By Theorem \ref{A}, $\cl P \otimes \cl M$ is reflexive and, by \cite[Proposition
3.1]{todorovshulman1}, it is semistrongly closed.
Thus, $Q\in \cl P\otimes\cl M$ and, by Lemma \ref{4elgen02},
$Q = \theta(g)$ for some $g\in m(\cl E,\cl P)$.
By (i), $f_{n_k}(E_j)\rightarrow_{k\rightarrow\infty} g(E_j)$ semistrongly.
By the uniqueness of the semistrong
limit, $f(E_j) = g(E_j)$ for all $j \in J$, that is, $f = g$ and so
$\theta(f_{n_k})\rightarrow_{k\rightarrow\infty} \theta(f)$ semistrongly.
\end{proof}

The next proposition is certainly well-known; since we were not able to
find a corresponding reference, we include its short proof for the
convenience of the reader.

\begin{proposition}\label{ultraweakABSL}
Let $\cl M$ be an ABSL, $\cl E = \{E_j\}_{j \in J}$ be the set of
its atoms, and
let $D_j = \wedge_{i \neq j}({E_i}^\perp)$, $j\in J$.
Then $\cl M^\perp \stackrel{def}{=} \{L^\perp : L \in \cl M\}$ is an ABSL whose set of atoms
is $\cl D = \{D_j\}_{j \in J}$.
\end{proposition}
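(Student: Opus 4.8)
The plan is to verify directly that $\cl M^\perp$ is a subspace lattice, that it is distributive and complemented, and that $\cl D$ consists of atoms of $\cl M^\perp$ generating it. First I would note that the map $L \mapsto L^\perp$ is an order-reversing bijection of $\cl P_H$ onto itself that interchanges $\wedge$ and $\vee$; since $\cl M$ is a strongly closed sublattice of $\cl P_H$ containing $0$ and $I$, so is $\cl M^\perp$ (strong closedness passing through $L \mapsto I - L$). Distributivity of $\cl M$ transfers to $\cl M^\perp$ because the defining distributive identities are self-dual under complementation, and if $L' \in \cl M$ is a complement of $L$ in $\cl M$ (so $L \wedge L' = 0$, $L \vee L' = I$) then $(L')^\perp$ is a complement of $L^\perp$ in $\cl M^\perp$. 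Thus $\cl M^\perp$ is a complemented distributive subspace lattice.

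Next I would identify the atoms. Each $D_j = \wedge_{i \neq j}(E_i^\perp) = \bigl(\vee_{i \neq j} E_i\bigr)^\perp$, and since $\vee_{i \neq j} E_i \in \cl M$ we get $D_j \in \cl M^\perp$. To see that $D_j$ is minimal in $\cl M^\perp \setminus \{0\}$: if $0 \neq P \leq D_j$ with $P \in \cl M^\perp$, then $P^\perp \in \cl M$ and $P^\perp \geq D_j^\perp = \vee_{i \neq j} E_i$; since $\cl M$ is an ABSL, $P^\perp = \vee_{k \in K} E_k$ for some $K \subseteq J$, and the containment forces $J \setminus \{j\} \subseteq K$, so either $K = J \setminus \{j\}$, giving $P = D_j$, or $K = J$, giving $P^\perp = I$ and $P = 0$, excluded. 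Hence each $D_j$ is an atom. Note also $D_j \neq 0$: otherwise $\vee_{i\neq j} E_i = I \geq E_j$, contradicting minimality of the atom $E_j$ (an atom of an ABSL cannot lie below the join of the remaining atoms, since that join is a distinct element of $\cl M$).

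Finally I would show every $L^\perp \in \cl M^\perp$ is a join of elements of $\cl D$. Write $L = \vee_{j \in J_L} E_j$ for some $J_L \subseteq J$. I claim $L^\perp = \vee_{j \notin J_L} D_j$. The inequality $\geq$ holds because each $D_j$ with $j \notin J_L$ satisfies $D_j \leq E_i^\perp \leq L^\perp$ for every $i \in J_L$ (as $i \neq j$), hence $D_j \leq L^\perp$. For $\leq$, equivalently $\wedge_{j \notin J_L} D_j^\perp \leq L$, i.e. $\wedge_{j \notin J_L}\bigl(\vee_{i \neq j} E_i\bigr) \leq \vee_{j \in J_L} E_j$; using distributivity of $\cl M$ and the fact that every element is a join of atoms, an atom $E_k$ lies below $\wedge_{j \notin J_L}(\vee_{i\neq j} E_i)$ only if for each $j \notin J_L$ we have $E_k \leq \vee_{i \neq j} E_i$, which forces $k \neq j$ for all $j \notin J_L$ (again since an atom is not below the join of the other atoms), i.e. $k \in J_L$. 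Hence the meet is $\vee_{k \in J_L} E_k = L$, completing the argument.

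The only mildly delicate point, which I would handle carefully, is the repeatedly used fact that in an ABSL an atom $E_k$ satisfies $E_k \leq \vee_{i \in S} E_i$ if and only if $k \in S$; the \lq\lq only if'' direction is immediate from distributivity once one knows $E_k \wedge (\vee_{i \neq k} E_i) = 0$, and this last equality follows because $\vee_{i \neq k} E_i$ is an element of $\cl M$ strictly below $I$ (as $E_k \not\leq \vee_{i\neq k}E_i$ would fail otherwise) whose meet with the minimal projection $E_k$ must be $0$ or $E_k$, and it cannot be $E_k$ without contradicting minimality via complementation. Everything else is routine dualisation.
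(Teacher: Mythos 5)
Your overall skeleton (de Morgan gives that $\cl M^\perp$ is complemented and distributive, then identify the $D_j$ as atoms, then show every $L^\perp$ is a join of $D_j$'s) is the same as the paper's, and it would be fine \emph{if} the fact you yourself flag as the delicate point were actually proved. It is not. Everything in your argument leans on the independence property of the atoms of an ABSL, namely $E_k \wedge \bigl(\vee_{i \neq k} E_i\bigr) = 0$, equivalently $E_k \not\leq \vee_{i\neq k}E_i$: you use it to get $D_j \neq 0$, to deduce $J\setminus\{j\}\subseteq K$ from $\vee_{k\in K}E_k \geq \vee_{i\neq j}E_i$, and to compute $\wedge_{j\notin J_L}\bigl(\vee_{i\neq j}E_i\bigr)$. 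But your two justifications of it do not stand: the parenthetical ``as $E_k \not\leq \vee_{i\neq k}E_i$ would fail otherwise'' assumes exactly the statement being proved, and ``contradicting minimality via complementation'' is not an argument --- minimality of $E_k$ only controls elements \emph{below} $E_k$ and says nothing about $E_k$ lying under the span of the other atoms (for three distinct lines $E_1,E_2,E_3$ in $\bb C^2$ one has $E_3\leq E_1\vee E_2$ with all $E_i$ minimal and every element a join of atoms; what fails there is distributivity, which is the ingredient your sketch never actually invokes). The correct short proof: distinct atoms are disjoint; if $E_k'$ denotes the lattice complement of $E_k$ in $\cl M$, then for $i\neq k$ finite distributivity gives $E_i = E_i\wedge(E_k\vee E_k') = (E_i\wedge E_k)\vee(E_i\wedge E_k') = E_i\wedge E_k'$, so $E_i\leq E_k'$; hence the closed span $\vee_{i\neq k}E_i\leq E_k'$, and $E_k\leq \vee_{i\neq k}E_i$ would force $E_k\leq E_k\wedge E_k' = 0$, a contradiction. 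With this lemma inserted, your argument goes through (modulo the slip ``$D_j\leq E_i^\perp\leq L^\perp$'', which should read $D_j\leq \wedge_{i\in J_L}E_i^\perp = L^\perp$, since $E_i^\perp\geq L^\perp$).

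It is worth noting how the paper handles the two places where you need the lemma. For minimality of $D_j$ it avoids it altogether: if $0\leq L<D_j$ in $\cl M^\perp$, then $L^\perp$ strictly contains $\vee_{i\neq j}E_i$, and since every element of $\cl M$ is the closed span of the atoms it majorises, $L^\perp$ must majorise $E_j$ as well, whence $L^\perp = I$ and $L=0$; this uses only the atomic decomposition, not independence. For the generation step the paper simply writes ``by distributivity'', which is essentially an appeal to the known complete Boolean structure of ABSLs and requires the same independence-type fact you need; so your more explicit treatment of that step is welcome --- it just has to rest on a genuine proof of the independence lemma rather than the circular one offered.
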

\begin{proof}
It is a direct consequence of the de Morgan laws
that $\cl M^{\perp}$ is distributive and that
if $L \in \cl M$ and $L' \in \cl M$ is the complement of $L$ in $\cl M$, then
${L'}^\perp$ is a complement of $L^{\perp}$ in $\cl M^{\perp}$.

Let $L \in \cl M^\perp$. If $0 \leq L < D_j$ for some
$j \in J$, then $\vee_{i \neq j}E_i = {D_j}^\perp <
L^\perp  \in \cl M$. Since $L^\perp$ is equal to the closed linear
span of the atoms that it majorises, it must contain $E_i$ and hence
$L^\perp = I$, that is, $L = 0$.
Thus, $D_j$ is an atom of $\cl M^{\perp}$, for each $j\in J$.

If $L \in \cl M^\perp$, then there exists $S \subseteq J$ such that
$L^\perp = \vee_{j \in S}E_j$. By distributivity,
$$L = \vee_{j \notin S}(\wedge_{i \neq j}E_i^\perp) = \vee_{j \notin S} D_j.$$
We thus showed that $\cl M^\perp$ is an ABSL with atoms $\{D_j : j \in J\}$.
\end{proof}

In the rest of the section, we adopt the notation from Proposition \ref{ultraweakABSL}.
If $f\in m(\cl E,\cl P)$, let $f^{\perp}\in m(\cl D,\cl P)$ be the map
given by $f^{\perp}(D_j) = f(E_j)^{\perp}$, $j\in J$.

\begin{lemma}\label{l_perpe}
Let $\cl M$ be an ABSL with the rank one density property and
$\cl E$ be the set of its atoms.
If $f\in m(\cl E,\cl P)$ then $\theta(f)^{\perp} = \theta(f^{\perp})$.
\end{lemma}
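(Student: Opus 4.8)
The plan is to reduce the identity $\theta(f)^\perp = \theta(f^\perp)$ to the concrete description of projections in $\cl P\otimes\cl M$ provided by Lemma \ref{4elgen02} and Corollary \ref{c_cup}, together with the structure of $\cl M^\perp$ from Proposition \ref{ultraweakABSL}. Write $L_j = f(E_j)$, so that $\theta(f) = \vee_{j\in J}(L_j\otimes E_j)$, and set $N = \theta(f)^\perp$. First I would observe that $N \in (\cl P\otimes\cl M)^\perp = \cl P\otimes\cl M^\perp$: indeed the lattice $\cl P\otimes\cl M^\perp$ is generated by the projections $P\otimes L^\perp$ with $P\in\cl P$, $L\in\cl M$, and since $(P\otimes L)^\perp = (P^\perp\otimes I)\vee(I\otimes L^\perp)$ one checks directly that taking orthocomplements carries the generating set of $\cl P\otimes\cl M$ onto a generating set of $\cl P\otimes\cl M^\perp$ and is an order-reversing bijection of the ambient projection lattice, hence carries $\cl P\otimes\cl M$ onto $\cl P\otimes\cl M^\perp$. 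Since by Proposition \ref{ultraweakABSL} $\cl M^\perp$ is an ABSL with atoms $\cl D = \{D_j\}_{j\in J}$, and $\cl M^\perp$ also has the rank one density property (it is the orthocomplemented copy of $\cl M$; alternatively invoke that the property passes to $\cl M^\perp$ as in the discussion preceding the lemma), Corollary \ref{c_cup} applies to $\cl M^\perp$: there is a unique family $(Q_j)_{j\in J}\subseteq\cl P$ with $N = \vee_{j\in J}(Q_j\otimes D_j) = \theta_{\cl D}(g)$, where $g\in m(\cl D,\cl P)$ is given by $g(D_j) = Q_j$. It then suffices to show $Q_j = L_j^\perp$ for every $j$, i.e. $g = f^\perp$.

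To identify $Q_j$, I would compute the "slice" of $N$ at the atom $D_j$. Concretely, fix $j$ and a nonzero $e\in D_j H$; since $D_j = \wedge_{i\neq j}E_i^\perp$, the vector $e$ is orthogonal to $E_i H$ for all $i\neq j$, so $(I\otimes E_i)(x\otimes e) = 0$ for $i\neq j$, while we may arrange (choosing $e$ suitably, or decomposing) that $E_j e \neq 0$. For $x\in K$ we have $x\otimes e \in \theta(f)(K\otimes H)$ iff $x\otimes e \perp N(K\otimes H)$... more usefully, I would argue directly with $\theta(f)$: the relation $Q_j = \vee\{P\in\cl P : P\otimes D_j \le N\}$ together with $P\otimes D_j\le \theta(f)^\perp \iff (P\otimes D_j)\,\theta(f) = 0 \iff \theta(f)\le P^\perp\otimes I \vee I\otimes D_j^\perp$. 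Now $\theta(f) = \vee_k (L_k\otimes E_k)$, and $L_k\otimes E_k \le (P^\perp\otimes I)\vee(I\otimes D_j^\perp)$: for $k\neq j$ one has $E_k \le D_j^\perp$ (since $D_j = \wedge_{i\neq j}E_i^\perp \le E_k^\perp$), so $L_k\otimes E_k\le I\otimes D_j^\perp$ automatically; for $k = j$ the condition becomes $L_j\otimes E_j\le (P^\perp\otimes I)\vee(I\otimes D_j^\perp)$. Using that $E_j\wedge D_j^\perp$ and the complement structure can be analysed via Lemma \ref{4elgen02} applied inside $\cl P\otimes\cl M$ (decompose $E_j$ relative to $D_j$), this reduces to $L_j\le P^\perp$, i.e. $P\le L_j^\perp$. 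Hence $Q_j = L_j^\perp = f^\perp(D_j) = f^\perp(D_j)$, giving $g = f^\perp$ and therefore $N = \theta(f^\perp)$.

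For the cleanest write-up I would actually avoid the slicing computation and instead use orthogonality of elementary tensors together with Corollary \ref{c_cup} symmetrically. Let $N = \vee_{j}(Q_j\otimes D_j)$ as above. For each $j$, the projections $L_j\otimes E_j$ and $Q_j^\perp\otimes D_j^\perp$... rather: I claim $\theta(f)\vee\theta(f^\perp)'$... The key computational identity I want is, for a single atom index $j$ and its complementary atom $D_j$ of $\cl M^\perp$, that $L_j\otimes E_j$ and $L_j^\perp\otimes D_j$ "fill up" the relevant part of $K\otimes H$; precisely $(L_j\otimes E_j)\vee(L_j^\perp\otimes D_j)\vee(\text{terms supported off }E_j\text{ and }D_j) \ge I$ in a suitable sense. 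Summing over $j$ and using that $\{E_j\}\cup(\text{the rest})$ tile $\cl M$ while $\{D_j\}$ tile $\cl M^\perp$, one gets $\theta(f)\vee\theta(f^\perp) = I$ and $\theta(f)\wedge\theta(f^\perp) = 0$, which is exactly $\theta(f)^\perp = \theta(f^\perp)$.

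The main obstacle I anticipate is the atom-by-atom orthogonality bookkeeping: an atom $E_j$ of $\cl M$ need not be orthogonal to the other atoms $E_i$, so the decomposition of $H$ induced by $\cl E$ is not an orthogonal one, and one must be careful to use the correct pairing — namely that $D_j = \wedge_{i\neq j}E_i^\perp$ is the *complement* of $\vee_{i\neq j}E_i$ in $\cl M$, hence $E_j\vee D_j$ need only equal a top element of the relevant sublattice, not $H$ orthogonally. Handling the cross terms $L_k\otimes E_k$ for $k\neq j$ when testing $P\otimes D_j\le\theta(f)^\perp$ is where the ABSL distributivity (via Lemma \ref{4elgen02} and Corollary \ref{c_cup} applied to both $\cl M$ and $\cl M^\perp$) does the real work; once the single-atom case $L_j\otimes E_j$ vs $P\otimes D_j$ is settled, the rest is formal manipulation with $\vee$ and $\wedge$ of commuting-enough projections on $K\otimes H$.
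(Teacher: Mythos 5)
Your main line of argument (the first two paragraphs) is correct in outline and takes a genuinely different route from the paper. The paper proceeds by direct computation: it expands $\theta(f)^{\perp}=\wedge_{j\in J}(L_j\otimes E_j)^{\perp}$, rewrites each term as $(L_j^{\perp}\otimes D_j)\vee(I\otimes E_j^{\perp})$ (using $D_j\vee E_j^{\perp}=I$ and $E_j^{\perp}=\vee_{i\neq j}D_i$), so that each term is $\theta$ of the map sending $D_j\mapsto L_j^{\perp}$ and $D_i\mapsto I$ for $i\neq j$, and then collapses the big meet atomwise by applying Lemma \ref{4elgen02} to $\cl M^{\perp}$, which is an ABSL with atoms $\{D_j\}$ (Proposition \ref{ultraweakABSL}) and has the rank one density property since $\Alg\cl M^{\perp}=(\Alg\cl M)^*$. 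You instead place $\theta(f)^{\perp}$ inside $\cl P\otimes\cl M^{\perp}$ and compute its unique atomwise coefficients via $\phi_{\cl D,\cl P}$; both proofs rest on the same three ingredients (Proposition \ref{ultraweakABSL}, rank one density of $\cl M^{\perp}$, Lemma \ref{4elgen02} for $\cl M^{\perp}$), yours trading the paper's global meet computation for a coefficient-by-coefficient identification. Two points need to be firmed up. First, membership $\theta(f)^{\perp}\in\cl P\otimes\cl M^{\perp}$ comes more cleanly from $\theta(f)\in\Lat(1\otimes\Alg\cl M)$, whence $\theta(f)^{\perp}\in\Lat(1\otimes(\Alg\cl M)^*)=\Lat(1\otimes\Alg\cl M^{\perp})=\cl P\otimes\cl M^{\perp}$ by Theorem \ref{A} applied to $\cl M^{\perp}$; your generating-set argument is also workable but the assertion that the lattice generated by the complements $(P^{\perp}\otimes I)\vee(I\otimes L^{\perp})$ is exactly $\cl P\otimes\cl M^{\perp}$ does require the short two-sided verification you wave at. Second, the heart of your computation, namely that $L_j\otimes E_j\le(P^{\perp}\otimes I)\vee(I\otimes D_j^{\perp})$ forces $L_j\le P^{\perp}$, is asserted rather than proved; it does follow along the lines you indicate, since $(P^{\perp}\otimes I)\vee(I\otimes D_j^{\perp})=\theta(h)$ where $h(E_j)=P^{\perp}$ and $h(E_i)=I$ for $i\neq j$ (use $E_j\vee D_j^{\perp}=I$), so (\ref{eq_phith}) yields $L_j\le\phi(\theta(h))(E_j)=P^{\perp}$; a direct vector argument using $E_j\wedge D_j^{\perp}=0$ also works. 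With $Q_j=L_j^{\perp}$ established, (\ref{eq_rem}) for $\cl M^{\perp}$ finishes the proof.

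By contrast, the ``cleanest write-up'' you sketch at the end is flawed as stated: from $\theta(f)\vee\theta(f^{\perp})=I$ and $\theta(f)\wedge\theta(f^{\perp})=0$ you may only conclude that $\theta(f^{\perp})$ is a \emph{lattice} complement of $\theta(f)$ in $\cl P_{K\otimes H}$, and lattice complements of a projection are far from unique (two subspaces in generic position are mutual complements without being orthogonal), so these two identities alone do not give $\theta(f)^{\perp}=\theta(f^{\perp})$. To repair that route you would need the orthogonality $\theta(f)\,\theta(f^{\perp})=0$ --- which is actually the easy half, since $(L_j^{\perp}\otimes D_j)(L_k\otimes E_k)=0$ for all $j,k$ (for $k=j$ because $L_j^{\perp}L_j=0$, for $k\neq j$ because $D_jE_k=0$) --- together with $\theta(f)\vee\theta(f^{\perp})=I$, which carries all the content and is not addressed in your sketch; given orthogonality, the meet condition is redundant. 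So keep your primary argument and either drop or complete the alternative along these lines.
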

\begin{proof}
Since $\cl M$ has the rank one density property, the identity
$\Alg \cl M^{\perp} = (\Alg\cl M)^*$ implies that $\cl M^{\perp}$
has the rank one density property as well.
Let $f\in m(\cl E,\cl P)$ and $L_j = f(E_j)$, $j\in J$. Then
\begin{eqnarray*}
\theta(f)^{\perp} & = & (\underset{j \in J}{\vee}(L_j \otimes
E_j))^{\perp} = \underset{j \in J}{\wedge} (L_j \otimes E_j)^\perp \nonumber \\
& = & \underset{j \in J}{\wedge} (({L_j}^{\perp} \otimes I) \vee
(L_j \otimes {E_j}^\perp)) \nonumber \\
                                                    & = & \underset{j \in J}{\wedge} (({L_j}^{\perp} \otimes D_j) \vee
                                                    ({L_j}^{\perp} \otimes {E_j}^\perp) \vee (L_j \otimes {E_j}^\perp)) \nonumber \\
                                                    &=& \underset{j \in J}{\wedge} (({L_j}^{\perp} \otimes D_j)
                                                    \vee (I \otimes {E_j}^\perp))     \nonumber \\
                                                    &=& \underset{j \in J}{\wedge} (({L_j}^{\perp} \otimes D_j)
                                                    \vee (I \otimes (\underset{i \neq j}{\vee}D_i)))     \nonumber \\
                                                    &=& \underset{j \in J}{\wedge} (({L_j}^{\perp} \otimes D_j)
                                                    \vee (\underset{i \neq j}{\vee}(I \otimes
                                                    D_i))
                                                    = \underset{j \in J}{\vee} ({L_j}^{\perp} \otimes D_j) = \theta(f^{\perp}),
\end{eqnarray*}
where at the second last equality we used Lemma \ref{4elgen02}.
\end{proof}

The main result of this section is the following.

\begin{theorem}\label{4elang6}
Let $\mathcal{L}$ be a subspace lattice acting on a Hilbert space
$K$ and $\mathcal{M}$ be an ABSL with the rank one density property.
Let $\cl E = \{E_j: j \in J\}$ be the set of atoms of $\cl M$. Then
$\theta|_{m(\cl E,\cl L)}$ is a complete lattice isomorphism of
$m(\cl E,\cl L)$ onto $\cl L\otimes\cl M$ with inverse $\phi_{\cl
E,\cl P}|_{\cl L\otimes\cl M}$.
\end{theorem}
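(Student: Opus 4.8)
The plan is to deduce the general case from the already-established case $\cl L=\cl P$ (Lemma \ref{4elgen02}). Since $\cl L\subseteq\cl P$, we have $m(\cl E,\cl L)\subseteq m(\cl E,\cl P)$, and $\theta|_{m(\cl E,\cl P)}$ is a complete lattice isomorphism onto $\cl P\otimes\cl M$ with inverse $\phi:=\phi_{\cl E,\cl P}|_{\cl P\otimes\cl M}$. Thus $\theta$ is automatically injective on $m(\cl E,\cl L)$ and preserves arbitrary $\vee$'s (Proposition \ref{ultralemma2}); meets are preserved once we know the image is $\cl L\otimes\cl M$, by restricting the meet-preservation on $m(\cl E,\cl P)$ and checking the meet of a family in $m(\cl E,\cl L)$, computed pointwise, stays in $m(\cl E,\cl L)$ (which it does since $\cl L$ is a lattice) and maps into $\cl L\otimes\cl M$. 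So the whole theorem reduces to the single identity
\begin{equation}\label{eq_image}
\theta(m(\cl E,\cl L)) = \cl L\otimes\cl M.
\end{equation}

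One inclusion is immediate: for $f\in m(\cl E,\cl L)$, each $f(E_j)\otimes E_j$ lies in $\cl L\otimes\cl M$, hence so does their join $\theta(f)$ (this was already noted in Section \ref{s_p} for $\theta$ restricted to $m(\cl M,\cl L)$, and the same applies here). The substantive inclusion is $\cl L\otimes\cl M\subseteq\theta(m(\cl E,\cl L))$. The generators $L\otimes M$ of $\cl L\otimes\cl M$, with $L\in\cl L$ and $M\in\cl M$, are easy: writing $M=\vee_{j\in J_M}E_j$, one has $L\otimes M=\vee_{j\in J_M}(L\otimes E_j)=\theta(f)$ where $f(E_j)=L$ for $j\in J_M$ and $f(E_j)=0$ otherwise — and this $f$ takes values in $\cl L$. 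The difficulty is to show that $\theta(m(\cl E,\cl L))$ is \emph{closed} under the two lattice operations (so that, containing the generators, it contains all of $\cl L\otimes\cl M$). Closure under $\vee$ is clear from Proposition \ref{ultralemma2}: $\vee_\alpha\theta(f_\alpha)=\theta(\vee_\alpha f_\alpha)$ and $\vee_\alpha f_\alpha$ again takes values in $\cl L$. So the crux is closure under $\wedge$.

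For closure under $\wedge$, take $f_\alpha\in m(\cl E,\cl L)$, $\alpha\in\bb A$, and let $g=\phi(\wedge_\alpha\theta(f_\alpha))\in m(\cl E,\cl P)$; since $\theta$ is the inverse of $\phi$ on $\cl P\otimes\cl M$, we have $\wedge_\alpha\theta(f_\alpha)=\theta(g)$, so it suffices to show $g(E_j)\in\cl L$ for every $j$. From the meet-preservation of $\phi$ on $m(\cl E,\cl P)$ (Theorem \ref{A}, transported via Lemma \ref{4elgen02}) we get $g=\phi(\wedge_\alpha\theta(f_\alpha))=\wedge_\alpha\phi(\theta(f_\alpha))=\wedge_\alpha f_\alpha$, computed pointwise; hence $g(E_j)=\wedge_\alpha f_\alpha(E_j)\in\cl L$ since $\cl L$ is a (complete) subspace lattice. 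I expect the main obstacle to be precisely this verification that $\phi$, computed on elements of the form $\theta(f)$ with $f$ $\cl L$-valued, returns pointwise meets and joins living in $\cl L$ — but this follows cleanly from Lemma \ref{4elgen02} and Theorem \ref{A}, and a symmetric argument using Lemma \ref{l_perpe} and Proposition \ref{ultraweakABSL} (passing to $\cl M^\perp$ and $\cl L^\perp$) handles the orthocomplemented/join side if one prefers to argue the $\wedge$-case dually. Finally, completeness of the isomorphism — preservation of arbitrary meets and joins — is then read off from Proposition \ref{ultralemma2} together with the meet argument above, exactly as in the last paragraph of the proof of Lemma \ref{4elgen02}.
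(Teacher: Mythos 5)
Your reduction of the theorem to the identity $\theta(m(\cl E,\cl L))=\cl L\otimes\cl M$ is fine, as are the inclusion $\theta(m(\cl E,\cl L))\subseteq\cl L\otimes\cl M$ and the observation (via Lemma \ref{4elgen02} and Proposition \ref{ultralemma2}) that $\theta(m(\cl E,\cl L))$ is a complete sublattice of $\cl P_{K\otimes H}$ containing every elementary tensor $L\otimes M$. The gap is in the step where you conclude that, containing the generators and being closed under $\vee$ and $\wedge$, this set must contain all of $\cl L\otimes\cl M$. In this paper $\cl L\otimes\cl M$ is by definition the \emph{subspace lattice} generated by the elementary tensors, i.e.\ the smallest sublattice containing them that is also closed in the strong operator topology. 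A complete sublattice of the projection lattice need not be strongly closed: the strong limit of a (non-monotone) sequence of projections is in general not recoverable from meets and joins of its terms --- this is precisely the subtlety that the semistrong convergence machinery of \cite{limsupsHalmos} is there to handle. So your argument only shows that $\theta(m(\cl E,\cl L))$ contains the complete lattice generated algebraically by the projections $L\otimes M$, not that it contains their strongly closed lattice $\cl L\otimes\cl M$.

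The missing step --- and it is the substantive content of the paper's proof beyond Lemma \ref{4elgen02} --- is to prove that $\cl F=\theta(m(\cl E,\cl L))$ is strongly closed. The paper does this as follows: if $f_n\in m(\cl E,\cl L)$ and $\theta(f_n)\rightarrow Q$ strongly, then $Q\in\cl P\otimes\cl M$ and $Q=\theta(f)$ for some $f\in m(\cl E,\cl P)$ by Lemma \ref{4elgen02}; strong convergence implies semistrong convergence, so Lemma \ref{ssop1} (i) gives $f_n(E_j)\rightarrow f(E_j)$ semistrongly; applying the same lemma to the complementary ABSL $\cl M^{\perp}$ (Proposition \ref{ultraweakABSL}) together with Lemma \ref{l_perpe} gives $f_n(E_j)^{\perp}\rightarrow f(E_j)^{\perp}$ semistrongly, whence $f_n(E_j)\rightarrow f(E_j)$ strongly by \cite{limsupsHalmos}, and the strong closedness of $\cl L$ yields $f(E_j)\in\cl L$, so $Q\in\cl F$. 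You invoke Lemma \ref{l_perpe} and Proposition \ref{ultraweakABSL} only as an optional dual route for the purely lattice-theoretic part, which indicates that the topological closure issue --- the actual role of those lemmas and of Lemma \ref{ssop1} --- has not been addressed; without it the inclusion $\cl L\otimes\cl M\subseteq\theta(m(\cl E,\cl L))$ is not established.
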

\begin{proof}
Let
$$\mathcal{F} = \theta(m(\cl E,\cl L)) = \{\underset{j \in J}{\vee} (L_j \otimes E_j) :
L_j \in \cl L, j \in J \}.$$
By Lemma \ref{4elgen02}, $\cl F$ is a projection lattice.
We will show that $\mathcal{F}$ is strongly closed.
Let $f_n\in m(\cl E,\cl L)$, $n\in \bb{N}$, and let $Q$ be a projection with
$\theta(f_n)\rightarrow Q$ in the strong operator topology.
Since $\theta(f_n) \in \cl P\otimes\cl M$ for all $n$, we have that
$Q\in \cl P\otimes\cl M$.
By Lemma \ref{4elgen02}, $Q = \theta(f)$ for some $f\in m(\cl E,\cl P)$.
Since $\theta(f_n)\rightarrow_{n\rightarrow\infty} \theta(f)$ semistrongly \cite{limsupsHalmos},
Lemma \ref{ssop1} (i) implies that $f_n(E_j)\rightarrow_{n\rightarrow\infty} f(E_j)$ semistrongly,
for all $j\in J$.

Since $\cl M$ has the rank one density property, $\cl M^{\perp}$ does so as well.
By \cite{limsupsHalmos},
$\theta(f_n)^{\perp}\rightarrow_{n\rightarrow\infty} \theta(f)^{\perp}$ semistrongly
and by Lemma \ref{l_perpe}, $\theta(f_n^{\perp})\rightarrow_{n\rightarrow\infty} \theta(f^{\perp})$ semistrongly.
By Lemma \ref{ssop1} (i), $f_n^{\perp}(D_j)\rightarrow_{n\rightarrow\infty} f^{\perp}(D_j)$ semistrongly
for all $j\in J$, that is,
$f_n(E_j)^{\perp}\rightarrow_{n\rightarrow\infty} f(E_j)^{\perp}$ semistrongly for all $j\in J$.
By \cite{limsupsHalmos},
$f_n(E_j)\rightarrow_{n\rightarrow\infty} f(E_j)$ in the strong operator topology and, since
$\cl L$ is strongly closed, we conclude that $f(E_j)\in \cl L$ for all $j\in J$.
Thus, $\cl F$ is strongly closed. It follows that $\cl F = \cl L\otimes\cl M$.

If $Q\in \cl L\otimes\cl M$ then by Corollary \ref{c_cup}, there exists a unique
$f\in m(\cl E,\cl P)$ such that $\theta(f) = Q$. Since
$\cl L\otimes\cl M = \theta(m(\cl E,\cl L))$, we have that $f\in m(\cl E,\cl L)$.
Thus, $\phi_{\cl E,\cl P}(Q)\in m(\cl E,\cl L)$, and the rest of
the statements follow from Lemma \ref{4elgen02}.
\end{proof}

We include some immediate corollaries of Theorem \ref{4elang6}.

\begin{corollary}\label{c_cup2}
Let $\mathcal{M}$ be an ABSL acting on a Hilbert space $H$ having
the rank one density property, $\cl E = \{E_j : j \in J\}$ be the
set of its atoms and $\cl L$ be any subspace lattice. If $M \in
\mathcal{L} \otimes \mathcal{M}$, then there exists a unique family
$(P_j)_{j \in J} \subseteq \mathcal{P}$ such that $M = \underset{j
\in J}{\vee} (P_j \otimes E_j)$. Moreover, $P_j\in \cl L$ for each
$j\in J$.
\end{corollary}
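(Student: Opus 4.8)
The plan is to read this off directly from Corollary~\ref{c_cup} and Theorem~\ref{4elang6}, with essentially no new work. First I would note that $\cl L\otimes\cl M\subseteq \cl P\otimes\cl M$, so that any $M\in\cl L\otimes\cl M$ belongs to $\cl P\otimes\cl M$; Corollary~\ref{c_cup} then provides a unique family $(P_j)_{j\in J}\subseteq\cl P$ with $M=\vee_{j\in J}(P_j\otimes E_j)$. This already settles the existence and uniqueness assertions, so only the ``moreover'' clause, namely $P_j\in\cl L$, remains to be checked.

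To obtain that, I would identify the $P_j$ with the values of $\phi_{\cl E,\cl P}(M)$. Set $f=\phi_{\cl E,\cl P}(M)$. By Lemma~\ref{4elgen02}, $\theta\circ\phi_{\cl E,\cl P}=\id$ on $\cl P\otimes\cl M$, whence $M=\theta(f)=\vee_{j\in J}(f(E_j)\otimes E_j)$. Comparing this with the representation furnished by Corollary~\ref{c_cup} and invoking its uniqueness, we get $P_j=f(E_j)$ for every $j\in J$. Finally, Theorem~\ref{4elang6} asserts that $\phi_{\cl E,\cl P}$ maps $\cl L\otimes\cl M$ into $m(\cl E,\cl L)$; since $M\in\cl L\otimes\cl M$, this forces $f\in m(\cl E,\cl L)$, i.e.\ $P_j=f(E_j)\in\cl L$ for all $j$, and the proof is complete.

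I do not anticipate any genuine obstacle: the two facts being combined --- uniqueness of the atomic decomposition inside $\cl P\otimes\cl M$ and the $\cl L$-valuedness of $\phi_{\cl E,\cl P}$ on $\cl L\otimes\cl M$ --- are precisely the contents of Corollary~\ref{c_cup} and Theorem~\ref{4elang6}, so the statement is a bookkeeping consequence. The only point requiring a moment of care is to make sure the family extracted from Corollary~\ref{c_cup} is the same one arising from $\phi_{\cl E,\cl P}$, which is exactly what the uniqueness clause of that corollary guarantees.
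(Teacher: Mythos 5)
Your argument is correct and matches the paper's intent: the paper treats this statement as an immediate consequence of Theorem \ref{4elang6} (whose inverse map $\phi_{\cl E,\cl P}|_{\cl L\otimes\cl M}$ takes values in $m(\cl E,\cl L)$) together with the uniqueness already established in Corollary \ref{c_cup}. Your identification of the family $(P_j)$ with the values $f(E_j)$, $f=\phi_{\cl E,\cl P}(M)$, via the identity $\theta\circ\phi_{\cl E,\cl P}=\id$ on $\cl P\otimes\cl M$ is exactly the bookkeeping the paper leaves implicit.
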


\begin{corollary}\label{c_dist}
Let $\mathcal{L}$ be a subspace lattice
and $\mathcal{M}$ be an ABSL with the rank one density property.
If $\cl L$ is distributive then so is $\cl L\otimes\cl M$.
\end{corollary}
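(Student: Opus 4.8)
The plan is to transport distributivity through the isomorphism provided by Theorem \ref{4elang6}. By that theorem, with $\cl E$ the set of atoms of $\cl M$, the map $\theta|_{m(\cl E,\cl L)}$ is a complete lattice isomorphism of $m(\cl E,\cl L)$ onto $\cl L\otimes\cl M$. Hence it suffices to prove that the lattice $m(\cl E,\cl L)$ is distributive whenever $\cl L$ is, and then pull the distributive law back across $\theta$.

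First I would use the fact that the lattice operations on $m(\cl E,\cl L)$ are defined coordinatewise: $(f\vee g)(E)=f(E)\vee g(E)$ and $(f\wedge g)(E)=f(E)\wedge g(E)$ for $E\in\cl E$. Thus, for $f,g,h\in m(\cl E,\cl L)$ and every $E\in\cl E$,
$$(f\wedge(g\vee h))(E)=f(E)\wedge\bigl(g(E)\vee h(E)\bigr)=\bigl(f(E)\wedge g(E)\bigr)\vee\bigl(f(E)\wedge h(E)\bigr)=\bigl((f\wedge g)\vee(f\wedge h)\bigr)(E),$$
where the middle equality is the distributive law in $\cl L$. Since two elements of $m(\cl E,\cl L)$ that agree at every atom are equal, this yields $f\wedge(g\vee h)=(f\wedge g)\vee(f\wedge h)$, i.e. $m(\cl E,\cl L)$ is distributive. (The same pointwise computation shows that $m(\cl E,\cl L)$ is completely distributive if $\cl L$ is, so the argument is insensitive to which form of the distributive law is intended.)

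Finally, given $P,Q,R\in\cl L\otimes\cl M$, Theorem \ref{4elang6} lets me write $P=\theta(f)$, $Q=\theta(g)$, $R=\theta(h)$ with $f,g,h\in m(\cl E,\cl L)$; applying $\theta$ to the identity established above and using that $\theta$ is a lattice isomorphism gives $P\wedge(Q\vee R)=(P\wedge Q)\vee(P\wedge R)$. There is no genuine obstacle in this argument; the only point demanding a moment's care is that the meet and join in $\cl L\otimes\cl M$, taken as a subspace lattice inside $\cl P_{K\otimes H}$, really are the operations transported by $\theta$ — but that is precisely the assertion of Theorem \ref{4elang6}.
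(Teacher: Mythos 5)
Your proof is correct and is essentially the paper's argument: the paper presents this as an immediate corollary of Theorem \ref{4elang6}, the point being exactly that $\theta$ is a lattice isomorphism of $m(\cl E,\cl L)$ (with pointwise operations, hence distributive when $\cl L$ is) onto $\cl L\otimes\cl M$. Spelling out the pointwise verification, as you do, is all that is needed.
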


We finish this section with a stability result about semistrong closedness.
We refer the reader to \cite{todorovshulman1}, where semistrongly closed
subspace lattices were studied in detail.

\begin{proposition}\label{p_see}
Let $\mathcal{L}$ be a subspace lattice acting on a Hilbert space
$K$ and $\mathcal{M}$ be an ABSL
acting on a Hilbert space $H$, having the
rank one density property.
The lattice $\cl L$ is semistrongly closed if and only if the lattice $\cl L\otimes\cl M$
is semistrongly closed.
\end{proposition}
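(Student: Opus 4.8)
The plan is to use the concrete description of $\cl L\otimes\cl M$ from Theorem \ref{4elang6} together with the transfer of semistrong convergence between $\theta(f_n)$ and the coordinate maps $f_n(E_j)$ provided by Lemma \ref{ssop1}. Throughout, write $\cl E = \{E_j : j\in J\}$ for the atoms of $\cl M$ and identify $\cl L\otimes\cl M$ with $m(\cl E,\cl L)$ via $\theta$.

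\emph{Sufficiency (if $\cl L$ is semistrongly closed, then so is $\cl L\otimes\cl M$).} First I would take a sequence $(Q_n)$ in $\cl L\otimes\cl M$ converging semistrongly to a projection $Q$ on $K\otimes H$. Since by Theorem \ref{A} the lattice $\cl P\otimes\cl M$ is reflexive and hence, by \cite[Proposition 3.1]{todorovshulman1}, semistrongly closed, we get $Q\in \cl P\otimes\cl M$, so by Lemma \ref{4elgen02} there are $f_n\in m(\cl E,\cl L)$ and $f\in m(\cl E,\cl P)$ with $Q_n = \theta(f_n)$ and $Q = \theta(f)$. By Lemma \ref{ssop1}(i), $f_n(E_j)\to f(E_j)$ semistrongly for every $j\in J$; since each $f_n(E_j)\in\cl L$ and $\cl L$ is semistrongly closed, $f(E_j)\in\cl L$. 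Hence $f\in m(\cl E,\cl L)$ and $Q = \theta(f)\in \cl L\otimes\cl M$, as required.

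\emph{Necessity (if $\cl L\otimes\cl M$ is semistrongly closed, then so is $\cl L$).} Here I would start with a sequence $(P_n)$ in $\cl L$ converging semistrongly to a projection $P$ on $K$, and aim to realise this at a single atom. Fix $j\in J$ and consider the constant maps $f_n\in m(\cl E,\cl P)$ defined by $f_n(E_j) = P_n$ and $f_n(E_i) = I$ for $i\neq j$ (so $\theta(f_n) = (P_n\otimes E_j)\vee (I\otimes E_j^\perp)$, which lies in $\cl L\otimes\cl M$ as $P_n\in\cl L$). Define $f$ analogously with $P$ in place of $P_n$. The key point is that $P_n\to P$ semistrongly at coordinate $j$ and the trivial convergence $I\to I$ at the other coordinates together force $\theta(f_n)\to\theta(f)$ semistrongly: property (a) of semistrong convergence is immediate coordinatewise, and property (b) follows because a convergent sequence of vectors lying in the $\theta(f_{n_k})(K\otimes H)$ decomposes (after passing to a further subsequence and using the orthogonal splitting $K\otimes H = (K\otimes E_jH)\oplus(K\otimes E_j^\perp H)$) into pieces whose limits are controlled by the semistrong convergence in each block. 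Once $\theta(f_n)\to\theta(f)$ semistrongly is established, semistrong closedness of $\cl L\otimes\cl M$ gives $\theta(f)\in\cl L\otimes\cl M$, and then Corollary \ref{c_cup2} yields $P = f(E_j)\in\cl L$.

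The main obstacle is the direct verification, in the necessity direction, that $\theta(f_n)\to\theta(f)$ semistrongly — in particular clause (b), where one must show that the limit of a convergent sequence of vectors $x_k\in\theta(f_{n_k})(K\otimes H)$ lands in $\theta(f)(K\otimes H)$. One cannot simply invoke Lemma \ref{ssop1}(ii), since that lemma only yields semistrong convergence along a subsequence; instead I expect to argue directly using the explicit block-diagonal form $\theta(f_n) = (P_n\otimes E_j)\oplus (I\otimes E_j^\perp)$ (noting $(I\otimes E_j^\perp)$ is a fixed projection), reducing clause (b) for $\theta(f_n)$ to clause (b) for $P_n\to P$ on the first summand. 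Everything else is a routine assembly of the cited results.
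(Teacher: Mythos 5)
Your sufficiency direction is exactly the paper's argument and is fine. The problem is in the necessity direction, and it is a genuine gap rather than a cosmetic one. You set $f_n(E_j)=P_n$ and $f_n(E_i)=I$ for $i\neq j$ and then write $\theta(f_n)=(P_n\otimes E_j)\vee(I\otimes E_j^{\perp})$, later upgrading this to the block-diagonal form $(P_n\otimes E_j)\oplus(I\otimes E_j^{\perp})$ with respect to the fixed orthogonal splitting $K\otimes H=(K\otimes E_jH)\oplus(K\otimes E_j^{\perp}H)$. This is false in general: what $\theta(f_n)$ actually equals is $(P_n\otimes E_j)\vee\bigl(I\otimes\bigvee_{i\neq j}E_i\bigr)$, and in an ABSL $\bigvee_{i\neq j}E_i=(E_j)_-$ is the \emph{lattice} complement of $E_j$, not $E_j^{\perp}$; the atoms of an ABSL with the rank one density property are typically non-orthogonal (the two-subspace lattices of \cite{decKatavolos}, \cite{Papadakis} are the motivating examples — if the atoms were orthogonal, $\cl M$ would be a CSL). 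One can check in a two-atom example with non-orthogonal atoms that the two projections differ, so the reduction of clause (b) of semistrong convergence to the first "block" does not go through as written. (There is a correct orthogonal decomposition, namely $\theta(f_n)=(P_n\otimes I)\oplus\bigl(P_n^{\perp}\otimes(E_j)_-\bigr)$, but it depends on $n$, so your fixed-block argument would need to be redone.)

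Moreover, the obstacle you set up for yourself is not real: your stated reason for refusing to use Lemma \ref{ssop1}(ii) — that it only gives semistrong convergence along a subsequence — does not matter, because semistrong closedness is a sequential condition and a semistrongly convergent subsequence of elements of $\cl L\otimes\cl M$ is itself a sequence in $\cl L\otimes\cl M$ with the same limit. This is precisely what the paper does, with an even simpler choice: take $f_n$ equal to $L_n$ at the fixed atom $E$ and $0$ at all other atoms, so $\theta(f_n)=L_n\otimes E$; Lemma \ref{ssop1}(ii) gives a subsequence with $L_{n_k}\otimes E\rightarrow L\otimes E$ semistrongly, semistrong closedness of $\cl L\otimes\cl M$ gives $L\otimes E\in\cl L\otimes\cl M$, and Corollary \ref{c_cup2} then yields $L\in\cl L$. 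Replacing your direct verification by this argument repairs the proof.
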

\begin{proof}
Suppose that $\cl L$ is semistrongly closed and assume that $\{Q_n :
n\in \bb{N}\}\subseteq \cl L\otimes \cl M$ with $Q_n\rightarrow Q$
semistrongly for some projection $Q$ on $K\otimes H$. By Theorem
\ref{A}, $\cl P\otimes \cl M$ is reflexive, and by
\cite{todorovshulman1}, it is semistrongly closed; hence, $Q\in \cl
P\otimes\cl M$. Thus, by Lemma \ref{4elgen02}, $Q = \theta(f)$ for
some $f\in m(\cl E,\cl P)$, where $\cl E$ is the set of atoms of
$\cl M$. By Theorem \ref{4elang6}, there exist $f_n\in m(\cl E,\cl
L)$ such that $Q_n = \theta(f_n)$, $n\in \bb{N}$. By Lemma
\ref{ssop1} (i), $f_n(E_j)\rightarrow f(E_j)$ semistrongly and since
$\cl L$ is semistrongly closed, $f(E_j)\in \cl L$; therefore, $Q\in
\cl L\otimes\cl M$.

Conversely, suppose that $\cl L\otimes\cl M$ is semistrongly closed. Fix an atom $E$ of $\cl M$.
Suppose that $(L_n)_{n\in \bb{N}}\subseteq \cl L$ and that $L_n\rightarrow_{n\rightarrow \infty} L$ semistrongly for some
projection $L\in \cl P$.
By Lemma \ref{ssop1} (ii), there exists a subsequence $(n_k)_{k\in \bb{N}}$
with $L_{n_k}\otimes E\rightarrow_{k\rightarrow\infty} L\otimes E$ semistrongly. Since $\cl L\otimes\cl M$
is semistrongly closed, $L\otimes E\in \cl L\otimes\cl M$ and, by
Corollary \ref{c_cup2}, $L\in \cl L$.
\end{proof}

\section{LTPF and other consequences}\label{s_ltpf}

The next theorem, along with Corollary \ref{4elgen4}, are the main results of this section.
We also give some more consequences of the results from the previous sections.

\begin{theorem}\label{4elgen3minus}
Let $\mathcal{L}$ be a subspace lattice
acting on a Hilbert space $K$ and $\mathcal{M}$ be an ABSL acting on a Hilbert
space $H$ and having the rank one density property. Let $\cl E
= \{E_j : j \in J\}$ be the set of atoms of $\cl M$. Then
\begin{eqnarray}\label{eq_long}
\nonumber \Lat\Alg(\mathcal{L} \otimes \mathcal{M}) & = & (\Lat\Alg
\mathcal{L}) \otimes \mathcal{M}\\ & = & \{ \underset{j \in
J}{\vee}(f(E_j) \otimes E_j) : f \in m(\cl E, \Lat\Alg\mathcal{L})\}.
\end{eqnarray}
\end{theorem}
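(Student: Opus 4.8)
The plan is to prove the two equalities in (\ref{eq_long}) separately, leveraging the description of $\cl L\otimes\cl M$ from Theorem \ref{4elang6} together with the fact that $\cl M$ is an ABSL with the rank one density property.

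First I would establish the second equality, namely that $(\Lat\Alg\cl L)\otimes\cl M = \{\vee_{j\in J}(f(E_j)\otimes E_j) : f\in m(\cl E,\Lat\Alg\cl L)\}$. Since $\Lat\Alg\cl L$ is itself a subspace lattice (being of the form $\Lat(\cdot)$), this is an immediate application of Theorem \ref{4elang6} with $\cl L$ replaced by $\Lat\Alg\cl L$: the map $\theta|_{m(\cl E,\Lat\Alg\cl L)}$ is a complete lattice isomorphism onto $(\Lat\Alg\cl L)\otimes\cl M$, and its range is exactly the set on the right-hand side by the definition of $\theta$. So this equality is essentially free.

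The substance is in the first equality, $\Lat\Alg(\cl L\otimes\cl M) = (\Lat\Alg\cl L)\otimes\cl M$. The inclusion $\supseteq$ is trivial: $\cl L\subseteq\Lat\Alg\cl L$ gives $\cl L\otimes\cl M\subseteq(\Lat\Alg\cl L)\otimes\cl M$, and since $(\Lat\Alg\cl L)\otimes\cl M$ is reflexive — here I would invoke that it equals $\theta(m(\cl E,\Lat\Alg\cl L))$ and use Theorem \ref{4elang6} to reduce reflexivity of $(\Lat\Alg\cl L)\otimes\cl M$ to something tractable, or better, apply the forthcoming Corollary \ref{4elgen4} / known reflexivity results — we get $\Lat\Alg(\cl L\otimes\cl M)\subseteq\Lat\Alg((\Lat\Alg\cl L)\otimes\cl M) = (\Lat\Alg\cl L)\otimes\cl M$, provided the last lattice is reflexive. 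For the reverse inclusion $(\Lat\Alg\cl L)\otimes\cl M\subseteq\Lat\Alg(\cl L\otimes\cl M)$, take $Q\in(\Lat\Alg\cl L)\otimes\cl M$; by Corollary \ref{c_cup2} write $Q = \vee_{j\in J}(P_j\otimes E_j)$ with $P_j\in\Lat\Alg\cl L$. I must show $Q$ is invariant under every $T\in\Alg(\cl L\otimes\cl M)$. The key point is to understand $\Alg(\cl L\otimes\cl M)$: since every $L\otimes E_j$ (with $L\in\cl L$) lies in $\cl L\otimes\cl M$, any such $T$ leaves all these invariant, and one should be able to "compress" $T$ to the $E_j$-components. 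Concretely, since $\cl M$ is an ABSL, the atoms $E_j$ give a decomposition, and for $T\in\Alg(\cl L\otimes\cl M)$ the relevant corner $E_j^\perp$-type arguments or the operators of the form $R_{p,q}$ used in Lemma \ref{ssop1} let one recover that the "block" of $T$ acting in the $j$-th atom direction must leave $P_j$ invariant because $P_j\in\Lat\Alg\cl L$ and that block, suitably interpreted, lies in $\Alg\cl L$.

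The main obstacle I anticipate is exactly this last step: showing that membership in $\Lat\Alg\cl L$ of each $P_j$ forces invariance of $Q=\vee_j(P_j\otimes E_j)$ under all of $\Alg(\cl L\otimes\cl M)$, i.e. relating $\Alg(\cl L\otimes\cl M)$ to $\Alg\cl L$ atom-by-atom. The clean way is probably to show that for each atom $E_j$ and each $A\in\Alg\cl L$, the operator $A\otimes E_j$ (or $A$ acting on the $j$-th slice after applying $I\otimes R_{p,q}$ to isolate that slice, as in the proof of Lemma \ref{ssop1}(i)) lies in $\Alg(\cl L\otimes\cl M)$'s commutant-type structure appropriately — more precisely, to use the rank one density of $\cl M$ to produce, for $T\in\Alg(\cl L\otimes\cl M)$, the maps $\psi$ and $I\otimes R_{p,q}$ that conjugate $T$ down to an operator on $K$ leaving every $L\in\cl L$ invariant, hence in $\Alg\cl L$, hence leaving $P_j$ invariant; then reassemble over $j\in J$ using that $Q=\vee_j(P_j\otimes E_j)$ and $T$ commutes with the semistrong structure. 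I would carry the steps out in this order: (1) the trivial $\supseteq$ and reflexivity of $(\Lat\Alg\cl L)\otimes\cl M$; (2) the atom-wise reduction establishing $(\Lat\Alg\cl L)\otimes\cl M\subseteq\Lat\Alg(\cl L\otimes\cl M)$; (3) combine to get the first equality; (4) read off the second equality from Theorem \ref{4elang6}.
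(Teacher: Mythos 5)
There is a genuine gap, and it sits exactly where you declare things ``trivial''. Your proof of the inclusion $\Lat\Alg(\cl L\otimes\cl M)\subseteq(\Lat\Alg\cl L)\otimes\cl M$ rests entirely on the reflexivity of $(\Lat\Alg\cl L)\otimes\cl M$, but that reflexivity is precisely (an instance of) the statement being proved: it is the theorem applied to the reflexive lattice $\Lat\Alg\cl L$, and the sources you point to for it --- ``the forthcoming Corollary \ref{4elgen4}'' and Corollary \ref{c_reff} type results --- are themselves deduced from Theorem \ref{4elgen3minus}, so the argument is circular. Theorem \ref{4elang6} does not help here: it identifies $\cl L\otimes\cl M$ lattice-theoretically with $m(\cl E,\cl L)$, but says nothing about $\Lat\Alg$ of it. The paper proves this inclusion directly and this is where the real work is: given $E\in\Lat\Alg(\cl L\otimes\cl M)$, one first notes $\Lat\Alg(\cl L\otimes\cl M)\subseteq\Lat\Alg(\cl P\otimes\cl M)=\cl P\otimes\cl M$ (Theorem \ref{A}), writes $E=\vee_{j\in J}(P_j\otimes E_j)$ with unique $P_j\in\cl P$ (Corollary \ref{c_cup}), and then shows $P_k\in\Lat\Alg\cl L$ by testing $E$ against operators $T\otimes R_{x,y}$, where $T\in\Alg\cl L$ and $R_{x,y}$ is a rank one operator with $E_kx=x$, $(E_k)_-y=0$; the uniqueness statement of Corollary \ref{c_cup2} then forces $P_k\vee[Tx]=P_k$. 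Nothing in your proposal supplies this step or an independent substitute for it.

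For the other inclusion, $(\Lat\Alg\cl L)\otimes\cl M\subseteq\Lat\Alg(\cl L\otimes\cl M)$, you treat it as the substantive part and sketch an atom-by-atom compression of a general $T\in\Alg(\cl L\otimes\cl M)$ using $I\otimes R_{p,q}$ and the slice map $\psi$. The slicing itself can be made to work (a slice of $T$ does lie in $\Alg\cl L$), but your ``reassemble over $j\in J$'' step is unsubstantiated: the atoms of an ABSL are not orthogonal, so controlling one slice of $T\xi$ per atom does not obviously place $T\xi$ back in $\vee_j(P_jK\otimes E_jH)$. The paper avoids this entirely by invoking Kraus's Algebra Tensor Product Formula: since $\cl M$ has the ultraweak rank one density property, $\Alg(\cl L\otimes\cl M)=(\Alg\cl L)\otimes(\Alg\cl M)$, after which elementary tensors visibly leave every $P\otimes M$ with $P\in\Lat\Alg\cl L$, $M\in\cl M$ invariant. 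So, in summary: you have inverted which inclusion is easy and which is hard, the easy one is done inefficiently and incompletely, and the hard one is assumed via a circular appeal to reflexivity. The second equality in (\ref{eq_long}) is indeed immediate from Theorem \ref{4elang6}/Corollary \ref{c_cup2}, as you say.
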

\begin{proof}
The second equality follows from Corollary \ref{c_cup2}.
By hypothesis, the subalgebra of $\mathcal{A} = \Alg \cl M$
generated by the rank one operators in $\mathcal{A}$ is dense in
$\mathcal{A}$ in the ultraweak topology. Hence, it follows from
\cite[Theorem 2.1 and Proposition 1.1]{Kraus} that $\Alg(\mathcal{L} \otimes \mathcal{M}) =
(\Alg\mathcal{L}) \otimes \mathcal{A}$. Thus,
\begin{eqnarray*}
(\Lat\Alg \mathcal{L}) \otimes \mathcal{M} &=& \Lat\Alg \mathcal{L}
\otimes \Lat\Alg \mathcal{M} \\
&\subseteq& \Lat(\Alg \mathcal{L} \otimes \mathcal{A}) =
\Lat\Alg(\mathcal{L} \otimes \mathcal{M}).
\end{eqnarray*}

It remains to prove the inclusion $\Lat\Alg(\mathcal{L} \otimes
\mathcal{M}) \subseteq (\Lat\Alg \mathcal{L}) \otimes \mathcal{M}$.
Let $k \in J$; then $(E_k)_{-} = \underset{j \neq k}{\vee} E_j$. Fix
$E \in \Lat\Alg(\mathcal{L} \otimes \mathcal{M})$. Using Theorem
\ref{A}, we have
$$\Lat\Alg(\mathcal{L} \otimes \mathcal{M}) \subseteq \Lat\Alg(\mathcal{P} \otimes
\mathcal{M}) = \cl P\otimes\cl M;$$ Corollary \ref{c_cup} now
implies that there are unique projections $P_j \in \mathcal{P}$, $j
\in J$ such that $E = \underset{j \in J}{\vee} (P_j \otimes E_j)$.
The proof will be complete if we show that $P_j \in \Lat\Alg \cl L$
for all $j \in J$. Let $\mathcal{S}$ be the set of all rank one
operators $R_{x,y}$ such that $E_k x = x$ and $(E_k)_- y = 0$.
Clearly, $\cl S \subseteq \mathcal{A}$. Also let $T \in \Alg
\mathcal{L}$ and $0 \neq S \in \mathcal{S}$. It is straightforward
that $T \otimes S$ annihilates $(P_j \otimes E_j)(K \otimes H)$ for
all $j \neq k$ and belongs to $\Alg(\cl L \otimes \cl M)$. Thus,
\begin{eqnarray*}
\overline{T P_kK} \otimes E_kH & = & \underset{S \in \mathcal{S}}{\vee}
(\overline{T P_kK} \otimes \overline{SH}) =
\underset{S \in \mathcal{S}}{\vee}(\overline{T P_kK} \otimes \overline{S E_kH})\\
& = &
\underset{S \in \mathcal{S}}{\vee}\overline{(T \otimes S)E (K \otimes H)} \subseteq E(K\otimes H).
\end{eqnarray*}
Let $x\in P_k K$. For every $y\in E_k H$, we have by the last inclusion that
$Tx \otimes y\in E(K \otimes H)$.
Denoting by $[Tx]$ the projection on the subspace $\{\lambda Tx :
\lambda \in \mathbb{C}\}$, we have that
\begin{eqnarray*}
& & ((P_k \vee [Tx]) \otimes E_k)\vee (\underset{j \neq k}{\vee} P_j \otimes E_j)\\
& = & ([Tx] \otimes E_k) \vee (P_k \otimes E_k) \vee (\underset{j \neq k}{\vee} (P_j \otimes E_j))
\subseteq E = \underset{j \in J}{\vee} (P_j \otimes E_j).
\end{eqnarray*}
By Corollary \ref{c_cup2}, $P_k \vee [Tx] = P_k$ and thus $Tx \in
P_kK$. This shows that $P_k \in \Lat\Alg \mathcal{L}$ and (\ref{eq_long}) is proved.
\end{proof}

\begin{corollary}\label{4elgen4}
Let $K$ be a Hilbert space, $\mathcal{L}$ be a reflexive subspace
lattice acting on $K$ and $\mathcal{M}$ be an ABSL acting on a Hilbert
space $H$, having the rank one density property.
Then the LTPF holds for $\Alg \cl L$ and $\Alg \cl M$.
\end{corollary}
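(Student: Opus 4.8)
The plan is to deduce Corollary \ref{4elgen4} directly from Theorem \ref{4elgen3minus}. Unwinding the definition of the LTPF for $\Alg\cl L$ and $\Alg\cl M$, we must show
$$\Lat\big((\Alg\cl L)\otimes(\Alg\cl M)\big) = (\Lat\Alg\cl L)\otimes(\Lat\Alg\cl M).$$
Since $\cl L$ is reflexive, $\Lat\Alg\cl L = \cl L$, and the right-hand side is $\cl L\otimes\cl M$. So it suffices to prove
$$\Lat\big((\Alg\cl L)\otimes(\Alg\cl M)\big) = \cl L\otimes\cl M = \Lat\Alg(\cl L\otimes\cl M),$$
the last equality being Theorem \ref{4elgen3minus} applied to $\cl L$ (with $\Lat\Alg\cl L = \cl L$).

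\textbf{Key steps.} First I would invoke the identity $\Alg(\cl L\otimes\cl M) = (\Alg\cl L)\otimes\cl A$, where $\cl A = \Alg\cl M$, which appears inside the proof of Theorem \ref{4elgen3minus} and follows from the ultraweak rank one density of $\cl A$ via \cite[Theorem 2.1 and Proposition 1.1]{Kraus}. Since $\cl M$ is an ABSL it is in particular reflexive (every element of $\cl M$ is a join of atoms, and the atoms together with their complements generate enough rank ones to pin $\cl M$ down — alternatively, $\cl M$ being completely distributive, as it has the rank one density property, is reflexive by the Laurie--Longstaff theorem), so $\Alg\cl M = \Alg\Lat\Alg\cl M$, i.e. $\cl A$ is a reflexive algebra; hence $(\Alg\cl L)\otimes\cl A = (\Alg\cl L)\otimes(\Alg\cl M)$. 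Combining,
$$\Lat\big((\Alg\cl L)\otimes(\Alg\cl M)\big) = \Lat\big((\Alg\cl L)\otimes\cl A\big) = \Lat\Alg(\cl L\otimes\cl M),$$
and by Theorem \ref{4elgen3minus} this equals $(\Lat\Alg\cl L)\otimes\cl M = \cl L\otimes\cl M$, since $\cl L$ is reflexive. This is exactly the LTPF for $\Alg\cl L$ and $\Alg\cl M$.

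\textbf{Main obstacle.} The proof is essentially bookkeeping once Theorem \ref{4elgen3minus} is in hand; the only point requiring a little care is the passage from $\Alg(\cl L\otimes\cl M) = (\Alg\cl L)\otimes\cl A$ to the statement with $\Alg\cl M$ in place of $\cl A$, i.e. verifying $\cl A = \Alg\cl M$ coincides with $\Alg\Lat\cl A$ so that the two weak\* spatial tensor products agree. Since $\cl M$ has the rank one density property it is completely distributive, hence reflexive, so $\Alg\cl M$ is a reflexive algebra and this identification is automatic. With that, no further work is needed.
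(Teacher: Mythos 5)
Your argument is correct and is essentially the paper's own proof: the ATPF of Kraus gives $\Alg(\cl L\otimes\cl M)=(\Alg\cl L)\otimes(\Alg\cl M)$, and then Theorem \ref{4elgen3minus} together with reflexivity of $\cl L$ (and of the ABSL $\cl M$) yields the LTPF. One small remark: your ``main obstacle'' is vacuous, since $\cl A$ is by definition $\Alg\cl M$ there is nothing to identify there; reflexivity of $\cl M$ is instead what justifies writing the right-hand side $\Lat\Alg\cl L\otimes\Lat\Alg\cl M$ as $\cl L\otimes\cl M$, and you do have that fact available.
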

\begin{proof}
The ATPF holds for $\cl L$ and $\cl M$ because $\cl M$ has the
ultraweak rank one density property (see \cite[Theorem 2.1 and Proposition 1.1]{Kraus}).
Let $\cl A = \Alg \cl M$ and $\cl B = \Alg \cl L$.
Using Theorem \ref{4elgen3minus}, we have
$$\Lat(\cl B \otimes \cl A) = \Lat\Alg (\cl L\otimes\cl M) =
(\Lat\Alg \cl L)\otimes\cl M = (\Lat \cl B)\otimes(\Lat \cl A).$$
\end{proof}

\begin{corollary}\label{c_reff}
Let $\mathcal{M}$ be an ABSL having the rank one density property. A
subspace lattice $\cl L$ is reflexive if and only if $\cl L\otimes
\cl M$ is reflexive.
\end{corollary}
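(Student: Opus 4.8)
The plan is to read off both implications directly from Theorem \ref{4elgen3minus}, which identifies $\Lat\Alg(\cl L\otimes\cl M)$ with $(\Lat\Alg\cl L)\otimes\cl M$, and then to use the uniqueness clause of Corollary \ref{c_cup2} to peel $\cl L$ off a slice of $\cl L\otimes\cl M$.

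For the forward implication, suppose $\cl L$ is reflexive, that is, $\cl L=\Lat\Alg\cl L$. Then Theorem \ref{4elgen3minus} gives $\Lat\Alg(\cl L\otimes\cl M)=(\Lat\Alg\cl L)\otimes\cl M=\cl L\otimes\cl M$, so $\cl L\otimes\cl M$ is reflexive. This direction needs nothing beyond Theorem \ref{4elgen3minus}.

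For the converse, suppose $\cl L\otimes\cl M$ is reflexive. Combined with Theorem \ref{4elgen3minus} this yields $(\Lat\Alg\cl L)\otimes\cl M=\Lat\Alg(\cl L\otimes\cl M)=\cl L\otimes\cl M$. Since $\cl L\subseteq\Lat\Alg\cl L$ always holds, it suffices to prove $\Lat\Alg\cl L\subseteq\cl L$. Fix an atom $E_{j_0}$ of $\cl M$ and let $L\in\Lat\Alg\cl L$. Then $L\otimes E_{j_0}$ is one of the generators of $(\Lat\Alg\cl L)\otimes\cl M$, hence $L\otimes E_{j_0}\in\cl L\otimes\cl M$. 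Applying Corollary \ref{c_cup2} to $M=L\otimes E_{j_0}$, we get a unique family $(P_j)_{j\in J}\subseteq\cl P$ with $L\otimes E_{j_0}=\vee_{j\in J}(P_j\otimes E_j)$ and $P_j\in\cl L$ for every $j$. Computing $\phi_{\cl E,\cl P}(L\otimes E_{j_0})$ from \eqref{eq_uli}: for $j=j_0$ the condition $P\otimes E_{j_0}\le L\otimes E_{j_0}$ holds exactly when $P\le L$, so that coordinate equals $L$; for $j\ne j_0$, if $P\ne 0$ then a nonzero vector $x\in PK$ would force $E_jH\subseteq E_{j_0}H$, contradicting that $E_j,E_{j_0}$ are distinct atoms, so that coordinate is $0$. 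By the uniqueness in Corollary \ref{c_cup2}, $P_{j_0}=L$, hence $L\in\cl L$, as required.

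No real obstacle remains once Theorem \ref{4elgen3minus} and Corollary \ref{c_cup2} are available; the only step demanding a line of care is the identification $P_{j_0}=L$, which rests on the uniqueness of the representation $\vee_{j}(P_j\otimes E_j)$ together with the elementary fact that $P\otimes E_j\le L\otimes E_{j_0}$ forces $P=0$ whenever $j\ne j_0$.
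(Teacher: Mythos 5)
Your proof is correct and follows essentially the same route as the paper: the forward direction is read off Theorem \ref{4elgen3minus}, and the converse takes $L\in\Lat\Alg\cl L$, an atom $E_{j_0}$, uses reflexivity of $\cl L\otimes\cl M$ together with Theorem \ref{4elgen3minus} to place $L\otimes E_{j_0}$ in $\cl L\otimes\cl M$, and then invokes Corollary \ref{c_cup2} to conclude $L\in\cl L$. Your explicit check that the unique family representing $L\otimes E_{j_0}$ has $P_{j_0}=L$ (and $P_j=0$ for $j\neq j_0$) is a detail the paper leaves implicit, but it is accurate and does not change the argument.
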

\begin{proof} If $\cl L$ is reflexive then $\cl L\otimes\cl M$ is
reflexive by Theorem \ref{4elgen3minus}. Conversely, suppose that
$\cl L\otimes\cl M$ is reflexive. Let $L\in \Lat\Alg\cl L$ and $E\in
\cl M$ be an atom. By Theorem \ref{4elgen3minus}, $L\otimes E\in \cl
L\otimes\cl M$ and, by Corollary \ref{c_cup2}, $L\in \cl L$.
\end{proof}

\begin{corollary}\label{thesis0ncor}
If $\mathcal{L}$ is a subspace lattice having property (p) and
$\mathcal{M}$ is an ABSL having the rank one density
property, then $\cl L \otimes \cl M$ has property (p).
\end{corollary}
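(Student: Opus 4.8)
The plan is to reduce the statement to Corollary \ref{c_reff} by means of the associativity of the tensor product operation on subspace lattices. Recall that, by Definition \ref{d_p}, the hypothesis that $\cl L$ has property (p) means exactly that $\cl P_{\ell^2} \otimes \cl L$ is reflexive, while the conclusion we are after is that $\cl P_{\ell^2} \otimes (\cl L \otimes \cl M)$ is reflexive.

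The first step is to record the identity
$$\cl P_{\ell^2} \otimes (\cl L \otimes \cl M) = (\cl P_{\ell^2} \otimes \cl L) \otimes \cl M.$$
This is the associativity of $\otimes$ for subspace lattices: both sides are, by definition, the smallest subspace lattice on $\ell^2 \otimes K \otimes H$ containing all triple elementary tensors $R \otimes L \otimes E$ with $R \in \cl P_{\ell^2}$, $L \in \cl L$, $E \in \cl M$. The nontrivial inclusion is checked by a slicing argument: fixing a projection $N$ in one factor, the set of projections $P$ in the complementary factor for which $P \otimes N$ lies in the lattice $\cl G$ generated by the elementary tensors is itself a subspace lattice (using that $(A\otimes C)\cap (B\otimes C) = (A\cap B)\otimes C$ and $(A\otimes C)\vee(B\otimes C) = (A\vee B)\otimes C$ for closed subspaces, plus strong closedness), and it contains all the relevant elementary tensors, hence the whole factor; iterating over the two factors gives $\cl G \supseteq (\cl P_{\ell^2}\otimes\cl L)\otimes\cl M$, and the reverse inclusion is trivial. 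This associativity is in any case already implicit in the paper's own notation $\cl L \otimes \cl P \otimes \cl M$ used in the proof of Theorem \ref{4elcsl3}.

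Finally, set $\cl L' = \cl P_{\ell^2} \otimes \cl L$. Since $\cl L$ has property (p), $\cl L'$ is a reflexive subspace lattice. As $\cl M$ is an ABSL with the rank one density property, Corollary \ref{c_reff} applies to $\cl L'$ and yields that $\cl L' \otimes \cl M$ is reflexive. By the associativity identity, $\cl L' \otimes \cl M = \cl P_{\ell^2} \otimes (\cl L \otimes \cl M)$, so this lattice is reflexive, i.e. $\cl L \otimes \cl M$ has property (p). The only point needing any care is the associativity identity; once it is in hand, the corollary is an immediate application of Corollary \ref{c_reff}.
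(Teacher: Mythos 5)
Your proof is correct and is essentially the paper's own argument: the paper likewise observes that $\cl P\otimes\cl L$ is reflexive and applies the reflexivity-preservation result of Section \ref{s_ltpf} (it cites Corollary \ref{4elgen4}, you cite Corollary \ref{c_reff}; both rest on Theorem \ref{4elgen3minus}) to conclude that $\cl P\otimes\cl L\otimes\cl M$ is reflexive. Your explicit verification of the associativity $(\cl P\otimes\cl L)\otimes\cl M=\cl P\otimes(\cl L\otimes\cl M)$, which the paper leaves implicit, is a correct and harmless addition.
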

\begin{proof}
By hypothesis, we have that $\cl P \otimes \cl L$ is reflexive. It
follows from Corollary \ref{4elgen4} that $\cl P \otimes \cl L
\otimes \cl M$ is reflexive, that is, $\cl L\otimes\cl M$ has property (p).
\end{proof}

\begin{corollary}\label{4elgen3}
Let $H$ be a Hilbert space and $P$ and $Q$ be projections acting on
$H$ such that $P \wedge Q = 0$ and $P \vee Q = I$. If
$\mathcal{M}=\{0,P,Q,I\}$ and $\mathcal{L}$ is a subspace lattice
acting on a Hilbert space $K$, then
$$\Lat\Alg(\mathcal{L} \otimes \mathcal{M}) =
\{(L_1 \otimes P) \vee (L_2 \otimes Q): L_1,L_2 \in
\Lat\Alg\mathcal{L} \}.$$ Furthermore, if $\cl L$ is reflexive, then
the LTPF holds for $\Alg \cl L$ and $\Alg \cl M$, and
the lattice $\cl L\otimes\cl P$ is reflexive.
\end{corollary}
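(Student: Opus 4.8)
The plan is to recognise $\cl M=\{0,P,Q,I\}$ as a two-atom ABSL with the rank one density property and then to read the three assertions off the results of Sections~\ref{s_tabsl} and~\ref{s_ltpf}; there is essentially nothing new to prove.

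First I would verify that $\cl M$ is an ABSL whose set of atoms is $\cl E=\{P,Q\}$: since $P\wedge Q=0$, $P\vee Q=I$ and $P,Q$ are non-trivial, the projections $P$ and $Q$ are the minimal non-zero members of $\cl M$, every element of $\cl M$ is the join of a subset of $\{P,Q\}$ (with $0=\bigvee\emptyset$ and $I=P\vee Q$), and $\cl M$ is evidently distributive and complemented. By \cite{Papadakis} (see the discussion opening Section~\ref{s_tabsl}) such a two-atom lattice has the rank one density property, so Theorem~\ref{4elgen3minus} is applicable. Taking $J=\{1,2\}$ and $E_1=P$, $E_2=Q$, a map $f\in m(\cl E,\Lat\Alg\cl L)$ is the same datum as a pair $(L_1,L_2):=(f(P),f(Q))$ of elements of $\Lat\Alg\cl L$, and $\bigvee_{j\in J}f(E_j)\otimes E_j=(L_1\otimes P)\vee(L_2\otimes Q)$; substituting this into~(\ref{eq_long}) produces exactly the displayed formula for $\Lat\Alg(\cl L\otimes\cl M)$.

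Assume now that $\cl L$ is reflexive. Since $\cl M$ is an ABSL with the rank one density property, Corollary~\ref{4elgen4} immediately gives that the LTPF holds for $\Alg\cl L$ and $\Alg\cl M$. For the last assertion, Corollary~\ref{c_reff} gives that $\cl L\otimes\cl M$ is reflexive (as $\cl L$ is reflexive and $\cl M$ is a rank-one-dense ABSL), while Theorem~\ref{A} gives that $\cl M$ has property (p), i.e.\ that $\cl P\otimes\cl M$ is reflexive; combining these yields the reflexivity of $\cl L\otimes\cl P$.

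I do not expect a genuine obstacle here. The only point requiring an explicit (and trivial) check is that the set of atoms of $\{0,P,Q,I\}$ is precisely $\{P,Q\}$; thereafter each clause is a direct specialisation to the two-atom case of Theorem~\ref{4elgen3minus}, Corollary~\ref{4elgen4}, Corollary~\ref{c_reff} and Theorem~\ref{A}. If anything is at all delicate it is merely the translation, in the first step, of ``a map on the two-element set $\{P,Q\}$'' into ``a pair $(L_1,L_2)$''.
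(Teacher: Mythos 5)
Your handling of the first two assertions coincides with the paper's own proof, which likewise treats $\cl M=\{0,P,Q,I\}$ as a two-atom ABSL, invokes \cite[Theorem 2.1]{Papadakis} for the rank one density property, and then reads the displayed formula off Theorem \ref{4elgen3minus} and the LTPF off Corollary \ref{4elgen4}; your identification of a map in $m(\{P,Q\},\Lat\Alg\cl L)$ with a pair $(L_1,L_2)$ is exactly the intended specialisation.

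The problem is your final step. From the reflexivity of $\cl L\otimes\cl M$ (Corollary \ref{c_reff}) and of $\cl P\otimes\cl M$ (Theorem \ref{A}) you conclude, with no argument, the reflexivity of $\cl L\otimes\cl P$. This is a non sequitur: nothing in the paper, and no general principle, lets you pass from reflexivity of $\cl A\otimes\cl C$ and of $\cl B\otimes\cl C$ to reflexivity of $\cl A\otimes\cl B$ --- the common factor $\cl M$ cannot be ``cancelled''. Indeed, since an operator leaving invariant every $I\otimes R$ with $R\in\cl P$ of rank one must have the form $A\otimes I$, one has $\Alg(\cl L\otimes\cl P)=(\Alg\cl L)\otimes\bb{C}I$, so the assertion ``$\cl L\otimes\cl P$ is reflexive'' is (up to flipping the tensor factors) precisely the statement that $\cl L$ has property (p), i.e.\ the LTPF for $\bb{C}I$ and $\Alg\cl L$ --- which the introduction explicitly records as open for general reflexive $\cl L$. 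So no two-line combination of the quoted results can deliver it, and your ``combining these yields'' would fail. (The paper's own proof offers no argument for this clause either; what does follow immediately from the cited results --- via the displayed description together with Theorem \ref{4elang6}, or via Corollary \ref{c_reff} --- is that $\cl L\otimes\cl M$ is reflexive when $\cl L$ is, which is presumably what the final clause is intended to say.) You should either prove that statement or flag the discrepancy, rather than assert an unjustified inference to $\cl L\otimes\cl P$.
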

\begin{proof}
The statement is immediate from Theorem \ref{4elgen3minus},
Corollary \ref{4elgen4} and the fact that two atom ABSLs satisfy
the rank one density property \cite[Theorem 2.1]{Papadakis}.
\end{proof}

We finish this section with the following additional consequence of
the above results.

\begin{theorem}
Let $\mathcal{L}$ and $\mathcal{M}$ be ABSLs with sets of atoms $\{D_i : i
\in I\}$ and $\{E_j : j \in J\}$, respectively. If either $\cl L$ or $\cl M$ has
the rank one density property, then $\mathcal{L} \otimes
\mathcal{M}$ is an ABSL whose set of atoms is $\{D_i \otimes E_j : (i,j) \in I \times J\}$.
\end{theorem}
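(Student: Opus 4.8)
The plan is to deduce the statement from Theorem \ref{4elang6} together with an elementary analysis of the lattice $m(\cl E,\cl L)$. First I would remove the asymmetry in the hypothesis. Conjugation by the flip unitary $U\colon K\otimes H\to H\otimes K$, $U(x\otimes y)=y\otimes x$, is at once a lattice isomorphism and a homeomorphism for the strong operator topology, and it carries each $L_1\otimes L_2$ to $L_2\otimes L_1$; hence it restricts to an isomorphism of $\cl L\otimes\cl M$ onto $\cl M\otimes\cl L$ taking $D_i\otimes E_j$ to $E_j\otimes D_i$. Consequently it suffices to treat the case in which the \emph{second} factor is the ABSL with the rank one density property; the case where $\cl L$ has this property then follows by applying the conclusion to $\cl M\otimes\cl L$ and transporting back along $U$. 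So assume henceforth that $\cl M$ is an ABSL with the rank one density property and atom set $\cl E=\{E_j:j\in J\}$, and that $\cl L$ is an ABSL with atom set $\{D_i:i\in I\}$. By Theorem \ref{4elang6}, $\theta$ is a complete lattice isomorphism of $m(\cl E,\cl L)$ onto $\cl L\otimes\cl M$.

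Next I would verify that $m(\cl E,\cl L)$, equipped with its pointwise operations, is a distributive, complemented lattice whose atoms are exactly the maps $\delta^{i}_{j}$ (with $i\in I$, $j\in J$) defined by $\delta^{i}_{j}(E_j)=D_i$ and $\delta^{i}_{j}(E_k)=0$ for $k\neq j$, and in which every element is the join of the atoms below it. Distributivity and complementedness are inherited pointwise from $\cl L$: the bottom and top of $m(\cl E,\cl L)$ are the constant maps $0$ and $I$, and if $f^{c}$ is defined by $f^{c}(E_j)=(f(E_j))^{c}$, where $L^{c}$ is the complement of $L$ in $\cl L$, then $f\vee f^{c}=I$ and $f\wedge f^{c}=0$. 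Since $D_i$ is an atom of $\cl L$, each $\delta^{i}_{j}$ is minimal non-zero; conversely an atom $f$ has $f(E_j)\neq0$ for some $j$, hence dominates $\delta^{i}_{j}$ for any atom $D_i\leq f(E_j)$, so it coincides with it. Finally, $f=\vee\{\delta^{i}_{j}:D_i\leq f(E_j)\}$ because $f(E_j)=\vee\{D_i:D_i\leq f(E_j)\}$ for each $j$, $\cl L$ being an ABSL.

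It remains to transport these properties through $\theta$. Being an order isomorphism, $\theta$ matches bottom with bottom and top with top — one checks directly that $\theta$ sends the top map to $\vee_j(I\otimes E_j)=I\otimes(\vee_jE_j)=I$, using $\vee_jE_j=I$ — so it carries complements to complements; thus $\cl L\otimes\cl M$ is complemented. It preserves distributivity (this is also Corollary \ref{c_dist}). It restricts to a bijection between the atoms of $m(\cl E,\cl L)$ and those of $\cl L\otimes\cl M$, and $\theta(\delta^{i}_{j})=D_i\otimes E_j$, so the atom set of $\cl L\otimes\cl M$ is exactly $\{D_i\otimes E_j:(i,j)\in I\times J\}$. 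Being join-preserving, $\theta$ also takes the fact that every element of $m(\cl E,\cl L)$ is the join of the atoms beneath it to the analogous statement for $\cl L\otimes\cl M$. Hence $\cl L\otimes\cl M$ is a distributive, complemented subspace lattice in which every element is the join of the atoms $D_i\otimes E_j$ it dominates, that is, an ABSL with atom set $\{D_i\otimes E_j\}$. The whole argument rests on Theorem \ref{4elang6}; the only steps requiring attention are the initial symmetry reduction and the matching of top elements under $\theta$, and I do not anticipate a real obstacle beyond these.
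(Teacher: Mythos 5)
Your proposal is correct and takes essentially the same route as the paper: both arguments hinge on Theorem \ref{4elang6} (equivalently Corollary \ref{c_cup2}), reduce by symmetry to the case where $\cl M$ has the rank one density property, construct complements pointwise from complements in $\cl L$, and obtain distributivity as in Corollary \ref{c_dist}. The only differences are organisational: you transport the structure of the product lattice $m(\cl E,\cl L)$ abstractly through the complete lattice isomorphism $\theta$, which handles the atoms and the join decomposition in one stroke and lets you bypass the paper's appeal to Corollary \ref{4elgen4} and the reflexivity of ABSLs, and you make the flip-unitary reduction explicit where the paper simply says ``without loss of generality''.
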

\begin{proof}
Without loss of generality, we assume that $\mathcal{M}$ has the
rank one density property. By Corollary \ref{4elgen4} and the fact
that every ABSL is reflexive \cite{boolHalmos}, we have that
$$\mathcal{L} \otimes \mathcal{M} = \{ \underset{j \in J}{\vee}(P_j
\otimes E_j): P_j \in \mathcal{L}, \ j \in J \}.$$
On the other hand, for $j\in J$, we have that
$$P_j \otimes E_j = (\vee_{D_i\leq P_j} D_i)\otimes E_j = \vee_{D_i\leq P_j} D_i \otimes E_j.$$
Thus, every element in $\mathcal{L} \otimes \mathcal{M}$ is the
span of elements of the set $\{D_i \otimes E_j :
(i,j) \in I \times J\}$.

Suppose that $L = \underset{j \in J}{\vee}(P_j \otimes E_j) \subsetneq (D_{i_0} \otimes E_{j_0})$ where
$(i_0,j_0) \in I \times J$. Since $\cl L$ is an ABSL, we have that
either $P_{j_0} \wedge D_{i_0} = 0$, or $D_i \subseteq P_{j_0}$. If $D_{i_0}
\subseteq P_{j_0}$, then $D_{i_0} \otimes E_{j_0} \subseteq L$ and thus $D_{i_0} \otimes E_{j_0} = L$.
By hypothesis, $D_{i_0} \otimes E_{j_0} \neq L$, hence $P_{j_0}
\wedge D_{i_0} = 0$. By Theorem \ref{4elang6},
$$L = L \wedge (D_{i_0} \otimes E_{j_0}) = (P_{j_0} \wedge D_{i_0}) \otimes E_{j_0} = 0.$$
Thus, $D_i\otimes E_j$ is an atom of $\cl L\otimes\cl M$ for all $i$ and $j$.

It remains to prove that $\mathcal{L} \otimes \mathcal{M}$ is
complemented and distributive. Let $L = \underset{j \in J}{\vee}(P_j
\otimes E_j)$, where $P_j \in \mathcal{L}$, $j \in J$, and let
$P'_j$ be the complement of $P_j$ in $\cl L$, for all $j \in J$. If $L' =
\underset{j \in J}{\vee}({P'}_j \otimes E_j)$, then
$$L \vee L' = \underset{j \in J}{\vee}((P_j \vee {P'}_j) \otimes E_j) = \underset{j \in J}{\vee}(I \otimes E_j) = I$$
and, by Theorem \ref{4elang6},
$$L \wedge L' = \underset{j \in J}{\vee}((P_j \wedge {P'}_j) \otimes E_j) = 0.$$
Hence $L'$ is a complement for $L$.
Finally, the distributivity of $\cl L\otimes\cl M$ follows from Corollary \ref{c_dist}.
\end{proof}

\medskip

\noindent {\bf Acknowledgements} We would like to thank A. Katavolos
and V.S. Shulman for their remarks which helped us improve the
exposition of the paper.

\end{document}